\DeclareMathAlphabet{\mathbfsf}{\encodingdefault}{\sfdefault}{bx}{n}
\DeclareFontShape{OT1}{cmr}{bx}{sc}{<-> cmbcsc10}{}
\theoremstyle{definition}
\newtheorem{thm}{Theorem}[section]
\newtheorem{lem}[thm]{Lemma}
\newtheorem{defn}[thm]{Definition}
\newtheorem{cor}[thm]{Corollary}
\newtheorem{prop}[thm]{Proposition}
\newtheorem{ex}[thm]{Example}
\newtheorem{rmk}[thm]{Remark}
\newcommand\define[1]{\textbf{#1}}
\newcommand{\calB}{\mathcal{B}}
\newcommand{\Ccat}{\mathbfsf{C}}
\newcommand{\Vect}{\mathbfsf{Vect}}
\newcommand{\vect}{\mathbfsf{vect}}
\newcommand{\Pers}{\mathbfsf{Pers}}
\newcommand{\Top}{\mathbfsf{Top}}
\newcommand{\Set}{\mathbfsf{Set}}
\newcommand{\set}{\mathbfsf{set}}
\newcommand{\Ord}{\mathbfsf{Ord}}
\newcommand{\cat}{\mathbfsf{C}}
\newcommand{\R}{\mathbb{R}}
\newcommand{\field}{\Bbbk}
\newcommand{\id}{{\text{id}}}
\newcommand{\supp}{\text{supp}}
\newcommand{\barcode}{B}
\newcommand{\Ecode}{E}
\newcommand{\BCspace}{\mathcal{B}}
\newcommand{\MTspace}{\mathcal{T}}
\newcommand{\CMTspace}{\mathcal{X}}
\title[The Fiber of the Persistence Map]{The Fiber of the Persistence Map \\
for Functions on the Interval}
\author{Justin Curry}
\begin{document}
\begin{abstract}
In this paper we study functions on the interval that have the same persistent homology, which is what we mean by the fiber of the persistence map.
By imposing an equivalence relation called graph-equivalence, the fiber of the persistence map becomes finite and a precise enumeration is given.
Graph-equivalence classes are indexed by chiral merge trees, which are binary merge trees where a left-right ordering of the children of each vertex is given.
Enumeration of merge trees and chiral merge trees with the same persistence makes essential use of the Elder Rule, which is given its first detailed proof in this paper.
\end{abstract}

\maketitle


\section{Introduction and Acknowledgements}

Let $f:X \to \R$ be a piece-wise linear function on a finite simplicial complex.
Persistence is a new type of geometry that generalizes Morse theory by quantifying the lifetimes of homological features of $X$, when filtered by sub-level sets of $f$.
The lifetime of a homology class is captured using an interval in $\R$ and the collection of the lifetimes of all homology classes is captured using a collection of intervals $\barcode=\{I_j\}$ called the \define{barcode} $\barcode$ or the \define{persistence diagram} associated to $f$---the latter being described in terms of a configuration of points in the extended plane $\mathbb{E}^2$.
The persistence map is the map that takes functions on $X$ to their associated persistence diagrams.

Although persistence has proved remarkably useful in data science~\cite{carlsson2009topology,edelsbrunner2010computational,ghrist-barcodes}, the analytical study of the persistence of functions has received little attention.
We mention two candidate problems in this vein.
\begin{enumerate}
	\item The Realization Problem: What is the image of the persistence map for each topological space $X$ and particular type of function $f$?
	\item The Closed Formula Problem: Is it possible to determine the persistence diagram associated to a semi-algebraic set that is filtered by a semi-algebraic function purely in terms of the equations involved?
\end{enumerate}
In this paper we take up the first question in the simplest possible case---functions on the interval---and go further by giving a complete characterization of the fiber of the persistence map.
Our main result, Theorem~\ref{thm:count-persistence}, proves that, after imposing a graph-equivalence relation on functions with local minima at $x=0$ and $x=1$, the fiber of the persistence map is finite and given by the formula
\[
| PH_0^{-1}(\barcode)| = 2^{N-1}\prod_{j=2}^N\mu_{\barcode}(I_j),
\]
where $\mu_{\barcode}(I_j):=|\{I_k\in\barcode \mid I_j \subset I_k\}|.$

We note that our graph-equivalence relation is a restricted version of topological conjugacy, so in some respects this work is similar to Arnold's Calculus of Snakes~\cite{arnold1992calculus}.
However, unlike Arnold's work, the quantities $\mu_{\barcode}(I_j)$ depend on the particular arrangement of points in the persistence diagram $\barcode$, so the number of realizations is not purely a function of the number of critical points, but also requires specifying the number of nested, paired critical points.

This nesting of paired critical points is captured via the \define{Elder Rule}, which provides a way of extracting persistent $H_0$ using persistent $\pi_0$.
Originally described using the \define{merge tree} associated to a function $f$, the Elder Rule also promises to deliver the decomposition of a persistent vector space (Definition~\ref{defn:pers-vect}) freely generated by a persistent set (Definition~\ref{defn:pers-set}) into indecomposables---a decomposition that exists in the finite-dimensional case by Crawley-Boevey's Theorem~\ref{thm:crawley-boevey}.
In the restricted setting of Morse sets (Definition~\ref{defn:morse-set}), we prove that this promise holds and give a new proof of the Elder Rule in Theorem~\ref{thm:elder-rule}.
In Remark~\ref{rmk:elder-sheaf}, we note that the Elder Rule also provides a method of finding the indecomposable summands of the pushforward of the constant sheaf along a map from a tree $\pi: T \to \R$.
In Remark~\ref{rmk:stratified-elder-map} we note how the above formula suggests a natural stratification of the space of barcodes based on the containment poset associated to $\barcode$.

In pursuit of our main theorem, Theorem~\ref{thm:count-persistence}, we do other things as well: We count the number of merge trees that have the same barcode in Theorem~\ref{thm:counting-merge-trees}, and introduce the notion of a \define{chiral merge tree}, which is a merge tree with a handedness decorating each of its edges.

\subsection{Acknowledgements}

The first real step forward on this problem happened in conversations with Hans Riess, who was an undergraduate at Duke at the time.
It was there that the author developed a method for constructing at least one function realizing any suitable barcode.
A full characterization of the fiber of the persistence map was obtained after conversations with Yuliy Baryshnikov at ICERM and with John Harer at Duke.
Yuliy Baryshnikov first introduced the concept of a chiral barcode associated to a time series in June 2015 in a talk at a conference on Geometry and Data Analysis at the Stevanovich Center for Financial Mathematics, 
the author then used the underlying concept of a chiral merge tree to create multiple realizations of a single barcode.
Finally, John Harer helped cinch the counting result by pointing out the restriction that the Elder Rule imposed on chiral merge trees.
The author believes that the conversations with Yuliy and John provided critical insights at the heart of this paper.

Other discussions have benefited subsequent revisions of this paper.
Rachel Levanger pointed out a deficiency in an earlier version of Definition~\ref{defn:merge-tree}.
Elizabeth Munch and Anastasios Stefanou first made the connection between interleavings and certain labeled merge trees that eventually appeared in their work~\cite{Munch2018}.
The author received several helpful comments from each of the anonymous referees, which helped improve the paper. 
One comment in particular caused the author to think more carefully about maps of ordered merge trees.

\section{The Persistence Map}

Let $(\R,\leq)$ denote the partially ordered set of real numbers, where $r\leq t$ if and only if $t-r$ is a non-negative real number.
We also view $\R$ as a category with one object for every real number and with a unique morphism $r \to t$ whenever $r\leq t$.

\begin{defn}
Let $\Ccat$ be any category.
A \define{persistence module valued in $\Ccat$} is a functor 
$$F: (\R,\leq) \to \Ccat.$$
In more detail, a persistence module valued in $\Ccat$ specifies an object $F(t)$ of $\Ccat$ for every time $t\in \R$ and a \emph{shift} morphism $\varphi_{tr}: F(r) \to F(t)$ for every relation $r\leq t$; any morphism can be a shift morphism so the only thing that the term ``shift'' signifies is how to shift the object at $r$ down the real line to the object at $t$.
The collection of shift morphisms defining $F$ must satisfy $\varphi_{tt}=\id_{F(t)}$ and $\varphi_{tr} =\varphi_{ts} \circ \varphi_{sr}$ for all $r\leq s \leq t$.

A map $\psi:F \Rightarrow F'$ of persistence modules is simply a natural transformation of functors, i.e.~for every $t\in \R$ a morphism $\psi_t:F(t) \to F'(t)$ is given and these morphisms are compatible with the shift morphisms in the sense that the following square commutes:
\[
\xymatrix{F(r) \ar[r]^{\varphi_{tr}} \ar[d]_{\psi_r} & F(t) \ar[d]^{\psi_t} \\ 
F'(r) \ar[r]_{\varphi'_{tr}} & F'(t)}
\]
In other words, for every pair $r\leq t$ we have $\varphi'_{tr}\circ \psi_r = \psi_t \circ \varphi_{tr}$.
We denote the category of persistence modules valued in $\Ccat$ by $\Pers(\Ccat)$.
\end{defn}

We now note some important special cases of this definition and the corresponding special language.

\subsection{Persistent Sets}

\begin{defn}[Persistent Sets]\label{defn:pers-set}
Let $\Set$ denote the category of sets and set maps.
A \define{persistent set} is a functor 
$$S: (\R,\leq) \to \Set.$$
A map of persistent sets is simply a natural transformation.
\end{defn}

\begin{rmk}[Maps to and from the Empty Set]\label{rmk:no-maps-empty}
Recall that the definition of a map of sets $f:X \to Y$ is that for every element $x$ of $X$ a unique element $f(x)$ in $Y$ is associated.
When $X$ is the empty set, this condition is automatically satisfied because there are no elements. In this case we conclude that for each set $Y$ there is a unique map $\varnothing \to Y$ called the \define{empty map}.
On the other hand the only map \emph{to} the empty set is \emph{from} the empty set. 
There can be no maps to the empty set from a non-empty set.
The implication of this observation is that for a persistent set $S$ if there is a $t\in \R$ so that $S(t)\neq \varnothing$, then $S(r)\neq \emptyset$ for every $r\geq t$.
\end{rmk}

\begin{ex}[Persistent Path Components]\label{ex:ppc}
Suppose $X$ is a topological space and $f:X \to \R$ is a function.
We can study the persistent set $S: (\R,\leq) \to \Set$ that assigns to every value $t$ the set of path components $S(t):= \pi_0(f^{-1}(-\infty,t])$ and to every pair of numbers $r\leq t$ the map $\varphi_{tr}: \pi_0(f^{-1}(-\infty,r]) \to \pi_0(f^{-1}(-\infty,t])$ on path components.
\end{ex}

\subsection{Persistent Vector Spaces}

When every set $S(t)$ has the structure of a vector space and when every map is a linear transformation of vector spaces, then we have the notion of a persistent vector space, which is called a \define{persistence module} in the literature.

\begin{defn}[Persistent Vector Spaces]\label{defn:pers-vect}
Let $\Vect_{\field}$ denote the category of $\field$-vector spaces and $\field$-linear transformations.
A \define{persistent vector space} is a functor 
$$F:(\R,\leq) \to \Vect_{\field}.$$
A map of persistent vector spaces is a natural transformation.
We often omit mention of the field $\field$ since it is arbitrary for this paper.
\end{defn}

\begin{rmk}[Terminology]
To the author's knowledge, the terms ``persistent set'' and ``persistent vector space'' are being used here for the first time.
The reasons for bringing this new terminology into circulation are twofold: Firstly, the pronounciation of ``persistent set'' sounds better than ``persistence set,'' which would be the logical shortening of ``persistence module valued in $\Set$.''
Secondly, an attempt to maintain grammatical consistency suggests that we use ``persistent vector spaces'' following ``persistent homology,'' rather than ``persistence modules.''
\end{rmk}

\begin{defn}\label{defn:free}
Let $S$ be a persistent set.
We get the persistent vector space \define{freely generated by $S$}, written $F_S$, by letting $F_S(t)$ be the vector space freely generated by $S(t)$ and letting the shift maps of $F_S$ be the linear maps freely generated by shift maps of $S$.
\end{defn}

\begin{defn}\label{ex:interval-modules}
Let $I\subseteq \R$ be an interval, i.e.~if $r,t\in I$ and $r\leq s \leq t$, then $s\in I$.
Associated to any interval in $\R$ is a persistent vector space that assigns $\field$ to every $t\in I$ and assigns the zero vector space to $t\notin I$.
For pairs of numbers $r\leq t$, both of which are in $I$, then the associated shift map is the identity map, otherwise it is the zero map.
In the literature this is called the \define{interval module} $I_{\field}$.
\end{defn}

\begin{rmk}
Note that if the interval $I\subset \R$ has $\sup I < \infty$, then the interval module $I_{\Bbbk}$ cannot be freely generated by any persistent set.
This is essentially due to Remark~\ref{rmk:no-maps-empty} because if $I_{\Bbbk}$ is freely generated by a persistent set $S$, then for $t\in I$ we have that $S(t)=\{\star\}$ the set with one element and no later set can be non-empty.

This also illustrates an important difference between the category of vector spaces and the category of sets. In $\Vect$ the zero vector space is both initial and terminal---meaning there are unique maps from and to the zero vector space---but in $\Set$ the empty set is initial and the one point set is terminal.
\end{rmk}

\subsection{Persistent Homology}

Although this narrative is anachronistic, persistent homology can be viewed as resting on the following technical result of Crawley-Boevey, which provides a finite characterization of any finite-dimensional persistent vector space.

\begin{thm}[{\cite[Thm.~1.1.]{crawley2012decomposition}}]\label{thm:crawley-boevey}
Any pointwise finite-dimensional persistent vector space (i.e.~persistence module) has a uniquely associated direct sum decomposition into interval modules.
\end{thm}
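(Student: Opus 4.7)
The plan is to follow Crawley-Boevey's three-part strategy: first establish uniqueness via the Krull-Schmidt-Azumaya theorem, second classify indecomposable pointwise finite-dimensional persistent vector spaces as interval modules, and third establish existence of a decomposition into indecomposables. The first two parts can be handled with fairly standard techniques once set up correctly; the third is the genuine technical obstacle.

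For uniqueness, I would apply Krull-Schmidt-Azumaya, which requires that each indecomposable summand has a local endomorphism ring. Given $\psi \in \mathrm{End}(F)$ with $F$ indecomposable and pointwise finite-dimensional, apply Fitting's lemma stalkwise to obtain, at each $t\in\R$, a decomposition $F(t) = \mathrm{im}(\psi_t^\infty) \oplus \ker(\psi_t^\infty)$. Naturality of $\psi$ promotes these to persistent sub-vector-spaces, giving $F \cong F' \oplus F''$, on which $\psi$ acts invertibly and locally nilpotently respectively. Indecomposability forces one summand to vanish, so every endomorphism is either an isomorphism or locally nilpotent; a check that $1 - \psi$ is invertible whenever $\psi$ is locally nilpotent (inverting $1-\psi_t$ stalkwise by a truncated geometric series and glueing by naturality) then shows $\mathrm{End}(F)$ is local.

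For the classification of indecomposables, let $F$ be indecomposable and pointwise finite-dimensional, and set $I = \{t\in\R : F(t)\neq 0\}$. If $I$ failed to be an interval, a nontrivial splitting would appear directly from the structure maps, contradicting indecomposability. A Fitting-style argument applied to each structure map $\varphi_{ts}$ with $s,t \in I$ then shows every such map must be an isomorphism, since otherwise its kernel/image decomposition would split $F$. A final rank argument shows each $F(t)$ is one-dimensional, so transporting a nonzero vector in some $F(t_0)$ across $I$ via the structure maps and their inverses produces an isomorphism $F \cong I_\field$.

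The main obstacle is existence. Over a finite totally ordered subset $t_1 < \cdots < t_n$ of $\R$, the restricted module is a representation of a type $A_n$ quiver, and Gabriel's theorem produces an interval decomposition. The difficulty is gluing these finite decompositions into a coherent global one, since a decomposition on a finite subset is determined only up to the automorphism group of each isotypic part and need not respect restriction to coarser finite subsets. The plan is to set up the inverse system of finite-subset restrictions and extract a coherent limit decomposition via a compactness or Zorn's-lemma argument on the poset of partial interval decompositions. Crawley-Boevey's own approach instead uses Ringel's functorial filtrations for the infinite linear quiver, which canonically produce the indecomposable summands and thereby sidestep the gluing problem; implementing that functorial filtration machinery honestly is what constitutes the bulk of the actual proof.
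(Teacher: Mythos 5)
First, a point of calibration: the paper does not prove this statement at all --- it is imported verbatim as \cite[Thm.~1.1]{crawley2012decomposition} and used as a black box (it is what licenses Definition~\ref{defn:barcodes}). So there is no in-paper proof to compare against; your proposal has to be judged against Crawley-Boevey's actual argument. Your outline does correctly reproduce the architecture of that argument: uniqueness via Azumaya/Krull--Remak--Schmidt once each indecomposable is shown to have local endomorphism ring, classification of the pointwise finite-dimensional indecomposables as interval modules, and existence as the genuinely hard step, which Crawley-Boevey handles with Ringel-style functorial filtrations rather than any gluing of Gabriel decompositions.

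The gap is in your existence step, and you have half-admitted it yourself. Your ``plan A'' --- take the inverse system of Gabriel decompositions over finite subsets $t_1<\cdots<t_n$ and extract a coherent limit by ``compactness or Zorn's lemma on the poset of partial interval decompositions'' --- does not go through as stated. A decomposition over a finite subset is unique only up to a non-canonical automorphism (a product of general linear groups acting on isotypic pieces), so the restriction maps between sets of decompositions are not well-defined maps of sets until choices are made, and there is no finiteness or compactness over an arbitrary field $\field$ to force a compatible thread through the system; moreover an upper bound for a chain of partial decompositions only yields a decomposition over a countable subset of $\R$, not over all of $\R$. This is precisely the obstruction that functorial filtrations are designed to circumvent: they produce the summands canonically, so nothing needs to be glued. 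Naming that machinery without implementing it leaves the main content of the theorem unproved. A secondary soft spot: in your classification of indecomposables, the claim that a non-isomorphism structure map $\varphi_{ts}$ on the support ``would split $F$'' itself requires constructing a persistent complement to the kernel/image consistently across all times, which is a miniature version of the same difficulty; it deserves more than a ``Fitting-style'' wave. Since the paper's authorial intent is clearly to cite this result rather than reprove it, the honest options are either to do likewise or to commit to writing out the functorial filtration argument in full.
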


\begin{rmk}
Interval modules are indecomposable representations of the poset $(\R,\leq)$.
In the setting where our persistent vector spaces are pullbacks of representations of $A_n$-type quivers, the above result follows from earlier work of Gabriel~\cite{gabriel1972}.
\end{rmk}

\begin{defn}\label{defn:barcodes}
The intervals that appear in the direct sum decomposition of a persistent vector space $F$ guaranteed by Theorem~\ref{thm:crawley-boevey} define the \define{barcode associated to $F$}, written $\barcode(F)$.
In general, a \define{barcode} is any multi-set of intervals $\barcode=\{(I_j;n_j)\}$; here $n_j$ is the (finite) number of repetitions of the interval $I_j$. 
We will often work in the generic setting where $n_j=1$ or $0$, and drop this extra information.
We denote the set of all barcodes by $\BCspace$.
\end{defn}

\begin{figure}[h]
\centering
\includegraphics[width=.5\textwidth]{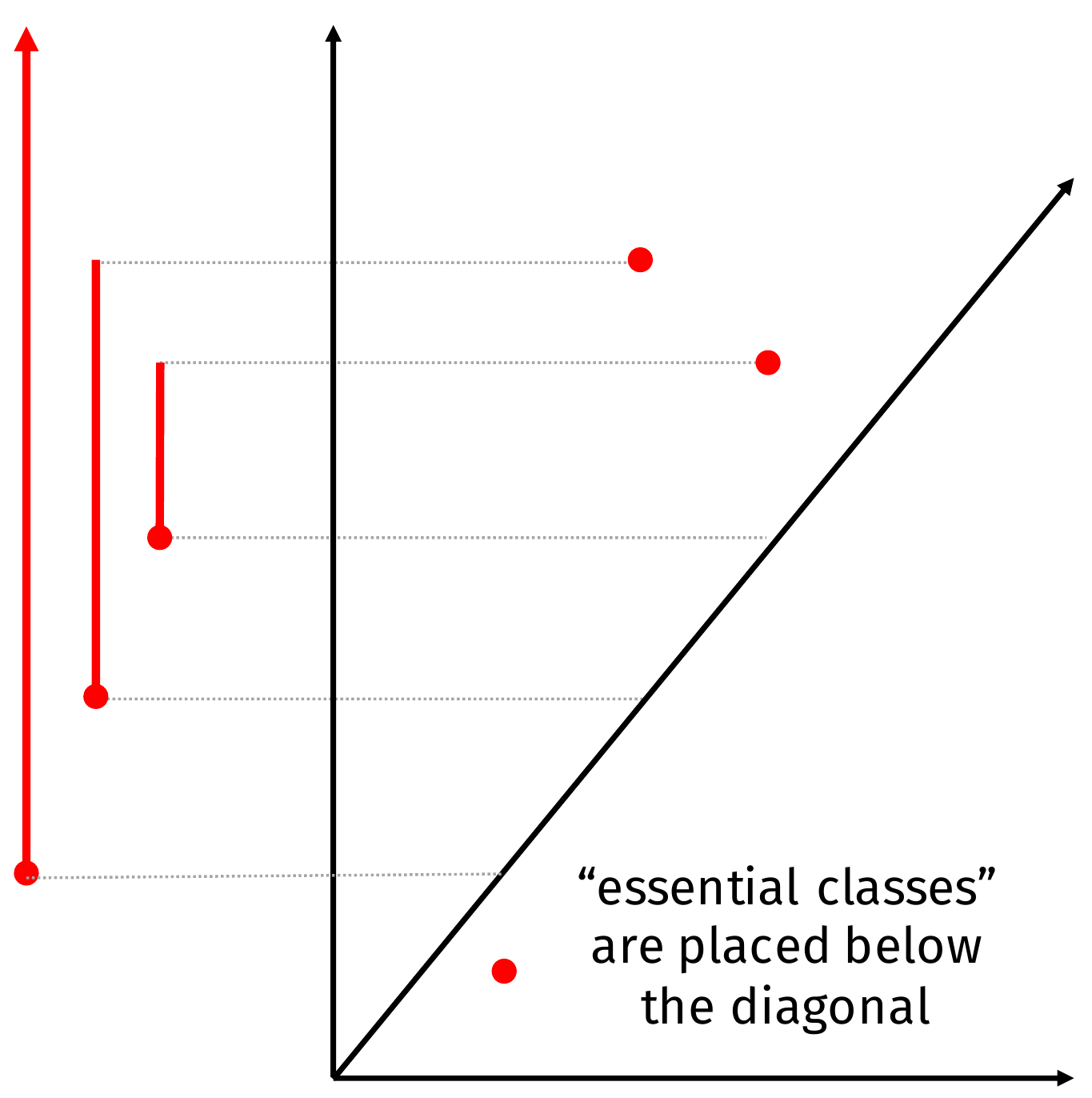}
\caption{The Persistence Diagram Associated to a Barcode.}
\label{fig:barcode-to-PD}
\end{figure}

\begin{rmk}[Persistence Diagrams]
Given a barcode $\barcode=\{I_j\}$ we can associate a collection of points in the extended plane $\mathbb{E}^2:=\R\cup \{-\infty\} \times \R \cup \{\infty\}$.
This is done by taking each interval $I_j\in\barcode$ to the coordinate-pair $(b_j,d_j)\in \mathbb{E}^2$ where $b_j=\inf I_j$ and $d_j=\sup I_j$.
This is called the \define{persistence diagram}~\cite{cohen2007stability} and it encodes the rank function of a persistent vector space as follows: If $\barcode(F)$ is the barcode associated to a persistent vector space, then the rank of the map $\varphi_{tr}: F(r) \to F(t)$ is given by the number of points in the persistence diagram up and to the left of the point $(r,t)\in \mathbb{E}^2$.
It should be noted that persistence was initially defined in terms of images of the shift maps, see~\cite{edelsbrunner2000topological}, and the language of barcodes appeared somewhat later~\cite{zomorodian2005computing}.
We pass back and forth between these two perspectives freely, using whichever representation works best in the moment, but we remark that each has their advantages. 
One deficit of persistence diagrams is that intervals with different endpoint types---say $[b,d)$ and $[b,d]$---are associated to the same point $(b,d)$ in the persistence diagram.
This difference doesn't matter so much in practice because most intervals that arise by filtering a space by a Morse function are closed on the left and open on the right, but in level-set persistence the endpoint type encodes topology, see~\cite{TDAcosheaves} for more on that.
However Amit Patel has shown in~\cite{generalized-PD} that persistence diagrams can be defined more generalized to settings where the barcode doesn't make sense.
\end{rmk}

\begin{defn}\label{defn:ph}
Associated to a function $f:X\to \R$ is the \define{persistent homology in degree $i$}, which is the persistent vector space
\[
F_i : (\R,\leq) \to \Vect \qquad t \rightsquigarrow H_i(f^{-1}(-\infty,t]).
\]
Here $H_i(-)$ denotes the homology functor in degree $i$ taken with coefficients in a field $\field$.
More accurately, the \define{persistent homology from $r\leq t$} is the \emph{image} of the map
\[
\varphi_{tr} : H_i(f^{-1}(-\infty,r]) \to H_i(f^{-1}(-\infty,t]).
\]
\end{defn}

Since $H_0$ is the vector space freely generated by $\pi_0$, we can think of $F_0$ as the persistent vector space freely generated by the persistent set described in Example~\ref{ex:ppc}.

\begin{defn}\label{defn:pers-map}
Suppose $X$ is a compact simplicial complex and let $L^{\infty}(X)$ denote the space of continuous functions on $X$, equipped with the sup-norm.
The \define{persistence map in degree $i$} is the map that takes each function $f$ to its persistent homology in degree $i$ and then its uniquely associated barcode:
\[
PH_i : \qquad  f\in L^{\infty}(X) \qquad \rightsquigarrow \qquad F_i\in \Pers(\vect) \qquad \rightsquigarrow \qquad \barcode(F_i) \in \BCspace
\]
\end{defn}

\begin{rmk}
This map is actually $1$-Lipschitz, after one equips the category of persistent vector spaces with the \define{interleaving distance}, see~\cite{chazal2012structure} for an overview.
Moreover, the last map, which takes a persistent vector space to its barcode, is an isometry once the space of barcodes is equipped with the \define{bottleneck distance}~\cite{Lesnick2015}.
However, we will not make explicit use of these metrics, and only refer to them in passing, e.g.~Remarks~\ref{rmk:lipschitz-elder-map},
\ref{rmk:stratified-elder-map} and \ref{rmk:metrics-for-OMTs}.
\end{rmk}


\section{The Elder Rule for Morse Sets}

In this section we give a treatment of the \define{Elder Rule} that differs in flavor from the description given in~\cite[p.~150]{edelsbrunner2010computational}.
In our language, the Elder Rule allows us to associate a barcode directly to a persistent set $S$, without first considering the persistent vector space $F_S$ that $S$ freely generates and by applying Theorem~\ref{thm:crawley-boevey}.
The fact that these two ways of associating a barcode to a persistent set agree is the content of Theorem~\ref{thm:elder-rule}.

Our treatment makes use of standard constructions in the theory of partially-ordered sets.
To that end, we recall some standard results and terminology associated to posets.
An \define{up-set} $U\subset (P,\preceq)$ is any set where $x\in U$ and $x\leq y$ jointly imply that $y\in U$.
A \define{principal up-set} is any set of the form $U_x:=\{y\in P \mid x\leq y\}$.
A \define{chain} $C$ in $(P,\preceq)$ is a subset of $P$ that is totally-ordered upon restriction of the partial order to $C$.
A chain $C$ is \define{maximal} if whenever we have two chains $C\subseteq C'$, then $C=C'$.

\begin{defn}\label{defn:P_S}
Let $S$ be a persistent set.
Consider the set $P_S=\bigcup_{t\in \R} S(t)\times \{t\}$.
We define a partial order $\preceq$ on $P_S$ by declaring $(x,r)\preceq (y,t)$ whenever $t-r$ is non-negative and $\varphi_{tr}(x)=y$.
\end{defn}

\begin{rmk}
Any functor $F:(P,\preceq) \to \Set$ can be thought of as a sheaf of sets in the Alexandrov topology.
This is the topology whose basis is given by principal up-sets $U_p$.
The construction above is a special case of the observation that the \'etal\'e space associated to a sheaf over a poset is also a poset.
Indeed the \'etal\'e space of $F:(P,\preceq) \to \Set$ is simply the disjoint union
\[
	E(F):= \bigsqcup_{p\in P} F(p) = \bigcup_{p\in P} F(p) \times \{p\}
\]
We can then define a partial order $\preceq'$ on $E(F)$ that extends the partial order on $(P,\preceq)$ by declaring $(x,p) \preceq' (y,q)$ whenever $p\preceq q$ and $\varphi_{qp}(x)=y$.
\end{rmk}

Recall that a map of posets is one that is order-preserving. Equivalently, a map of posets is a continuous map in the Alexandrov topology.

\begin{defn}\label{defn:support}
Let $\pi: (P,\preceq) \to (\R,\leq)$ be a map of posets.
Whenever we write $P(t)$ we mean $\pi^{-1}(t)$.
If $P(t)\neq \varnothing$, then we say $P$ is \define{supported} at $t$.
The \define{support} of $P$ is defined to be
\[
\supp(P):=\{t\in \R \mid \pi^{-1}(t)\neq \varnothing \}.
\]
Note that $\supp(P)$ need not be closed in $\R$ with the Euclidean or the Alexandrov topology.
\end{defn}

We now define what it means for a chain to be older than another chain.

\begin{defn}\label{defn:older}
Let $\pi: (P,\preceq) \to (\R,\leq)$ be a map of posets and
let $C_1$ and $C_2$ be two chains in $P$.
We say that $C_1$ is \define{older} than $C_2$ if there exists a $t_1\in \supp(C_1)$ such that $t_1 < t_2$ for all $t_2\in \supp(C_2)$.
\end{defn}

We view the Elder Rule as a method for partitioning the poset $P_S$ associated to a persistent set $S$ into chains.
In order to describe this method inductively, and without special case analysis, we introduce the notion of a Morse set, which is a special case of a constructible persistence module valued in $\Set$, see~\cite[Def.~2.2]{generalized-PD}.
Intuitively speaking, a Morse set is an abstraction of the persistent set of path components associated to a Morse function on a compact, connected manifold; cf.~Example~\ref{ex:ppc}.

Recall that a \define{constant} functor is a functor that assigns to every object in its domain of definition a single object and sends every morphism to the identity map on that object.

\begin{defn}\label{defn:morse-set}
Let $\set$ denote the category of finite sets and set maps.
A \define{Morse set} is a functor $S:(\R,\leq) \to \set$ along with a finite sequence of times $\tau =\{\tau_1 < \cdots < \tau_n\}$ so that
\begin{enumerate}
	\item $S(t)=\varnothing$ for $t<\tau_1$,
	\item $S|_{[\tau_i,\tau_{i+1})}$ is naturally isomorphic to the constant functor with value $S(\tau_i)$ for $i=1,\ldots, n-1$,
	\item $S(t)=\{\ast\}$ for $t\geq \tau_n$,
	\item for all $x\in S(\tau_{i+1})$ the fiber $\varphi_{i+1,i}^{-1}(x)$ has cardinality 0,1, or 2, and
	\item each element $y\in S(t)$ is contained in a unique, oldest maximal chain $C_y$ in the associated poset $P_S$ defined in Definition~\ref{defn:P_S}.
\end{enumerate}
Moreover, we assume that the set of times $\tau \subset \R$ is the minimal set of times making the above statements true.
We let $\mathcal{S}$ denote the set of isomorphism classes of Morse sets.
\end{defn}

\begin{rmk}[Interpretation of the Hypotheses]
As already mentioned, the Morse set is supposed to represent the properties of the persistent set of path components associated to a Morse function---where crtical points are further assumed to have distinct critical values.
In particular, when we desribe the merge tree $T$ associated to a Morse Set in Lemma~\ref{lem:MS-to-MT}, the fifth hypothesis expresses the requirement that each leaf node $v$ has a uniquely associated value $\pi(v)$.
\end{rmk}

\begin{defn}[The Elder Rule]\label{defn:elder-rule}
Let $S$ be a Morse set, constructible with respect to the times $\tau =\{\tau_1 < \cdots < \tau_n\}$.
The \define{Elder Rule} gives the following inductive chain decomposition of the poset $P_S$ described in Definition~\ref{defn:P_S}.
For any poset map $\pi:(P,\preceq) \to (\R,\leq)$, we define $P(t):=\pi^{-1}(t)$.
\begin{itemize}
	\item Let $P_1$ be the poset $P_S$ and let $C_1$ be the unique, oldest maximal chain containing the element $\star \in P_1(\tau_n)$.
	\item Let $P_{i+1}=P_i-C_i$, i.e.~the poset $P_i$ with the chain $C_i$ removed.
	\item By the fourth and fifth hypotheses of Definition~\ref{defn:morse-set}, $P_{i+1}$ has a unique element $\star_{n-i} \in P_{i+1}(\tau_{n-i})$ that is contained in a unique, oldest chain $C_{i+1}$.
\end{itemize}
Let $\Ecode =\{\pi(C_i)\}$ to be the set of intervals associated to the chains $C_i$ via projection along $\pi$.
This defines the barcode associated to $S$ by the Elder Rule.
\end{defn}

\begin{figure}[h]
\centering
\includegraphics[width=.8\textwidth]{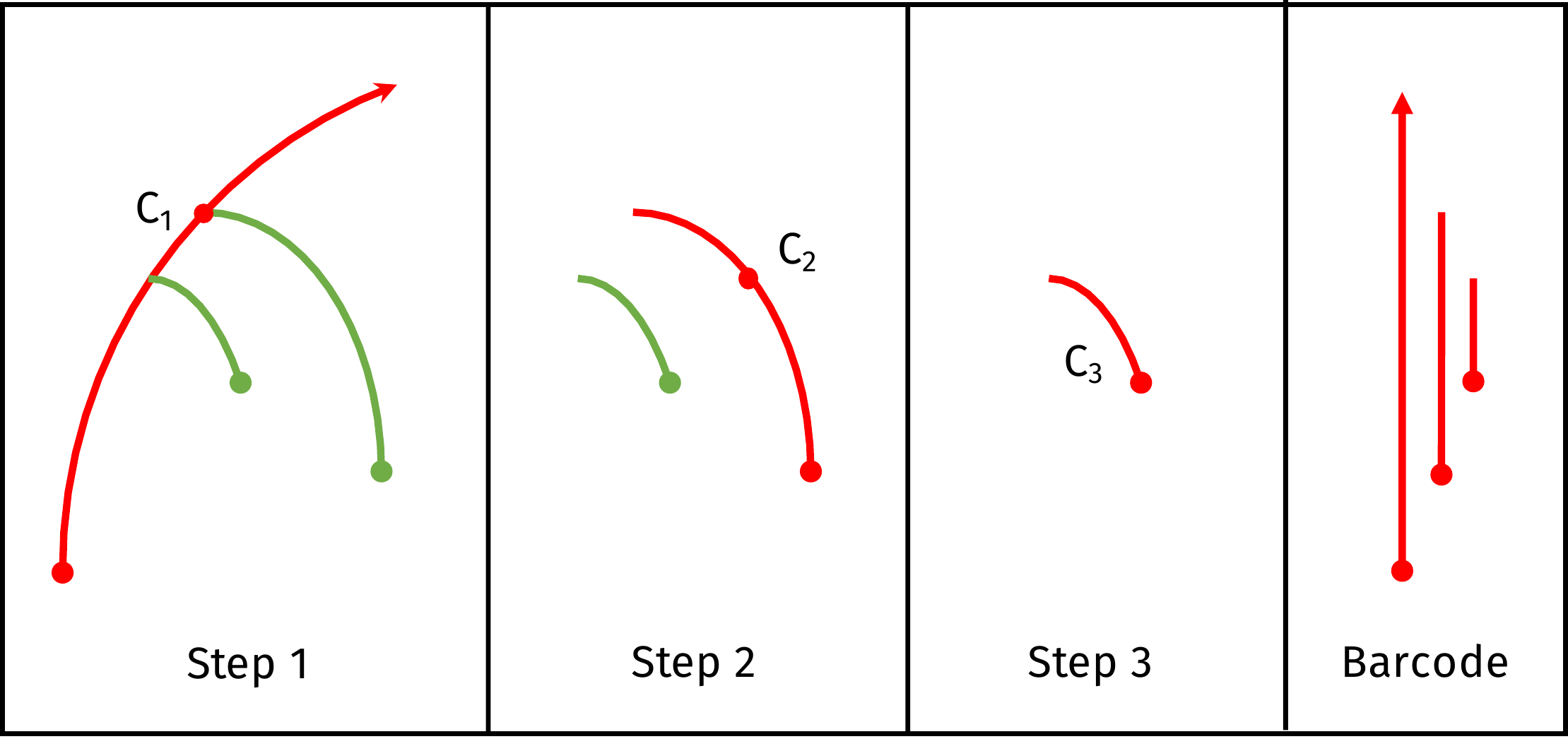}
\caption{Inductive chain decomposition of a Morse set and its projected barcode.}
\label{fig:elder-rule-ex}
\end{figure}

\begin{rmk}
The hypotheses in the definition of a Morse set are adapted to make the statement and proof of the Elder rule as simple as possible, without having to handle extra unnecessary cases or make non-canonical choices.
Relaxation of the fifth hypothesis of a Morse set, for example, could allow the Elder rule to be applied to persistent sets whose merge trees have several leaf nodes with the same function value and this would introduce non-canonical choices of picking one of possible several equally old chains. This is common in practice and it would not be too difficult to change the statement and proof of the Elder rule to allow this, but it would complicate the presentation considerably, so we have contented ourselves with the generic case.
\end{rmk}

\begin{figure}[h]
\centering
\includegraphics[width=\textwidth]{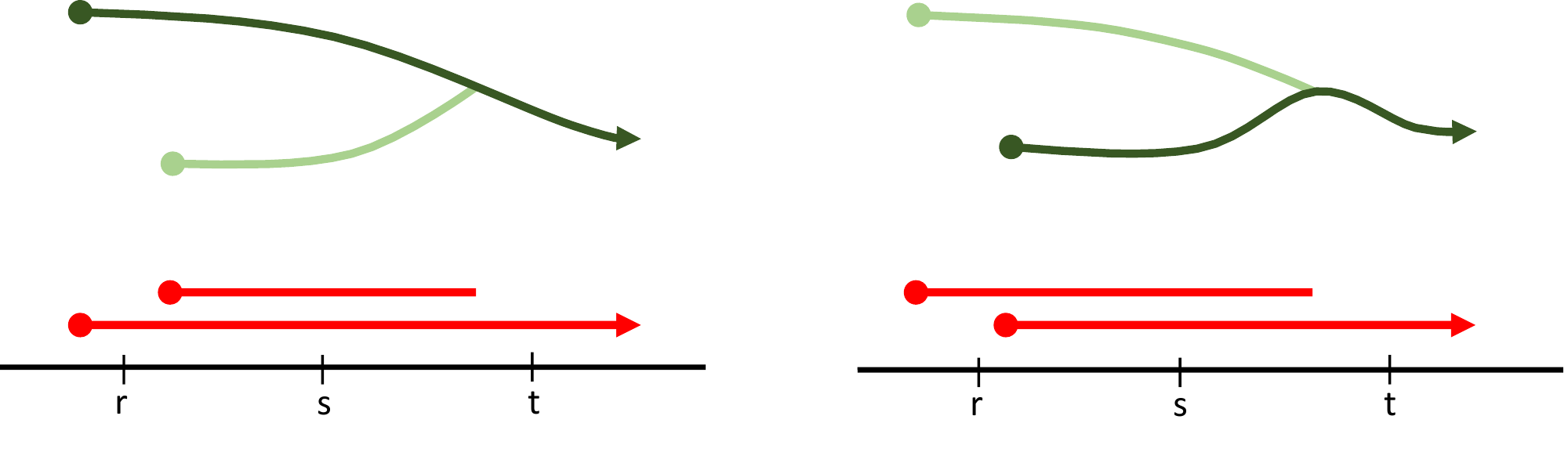}
\caption{The Elder Rule is used to compute the barcode on the left. This accurately determines the rank of the map $\varphi_{tr}$. The barcode on the right does not.}
\label{fig:elder-chains}
\end{figure}

\begin{ex}
In Figure~\ref{fig:elder-chains} we have the same persistent set $S$, but with two different partitions into chains. 
By considering the projection of the chain decomposition, we get two different barcodes $\Ecode$ and $\barcode$, depicted to the left and right, respectively, in Figure~\ref{fig:elder-chains}.

If one considers the persistent vector spaces $V_{\Ecode}$ and $V_{\barcode}$ generated using the intervals in their respective barcodes, one can see an important distinction between the two: Only $V_{\Ecode}$ has the property that it is isomorphic to $F_S$---the persistent vector space freely generated by the persistent set $S$.
To see that $V_{\barcode}\ncong F_S$ note that the linear map from $r$ to $t$ is zero for $V_{\barcode}$ and non-zero for $F_S$.
\end{ex}

We now prove that the Elder Rule is true. 
To the author's knowledge there is no precise proof of the Elder Rule in the literature. 
Although the correctness of the Elder Rule is guaranteed by the design of the original persistence algorithm, we provide a proof of a different flavor.
We note that the proof techniques used below are quite general and should work for persistent sets more general than Morse sets, as soon as the Elder rule is properly modified to allow more general persistent sets, but we leave this generalization to future work.

\begin{thm}[The Elder Rule]\label{thm:elder-rule}
Let $S$ be a Morse set. 
Let $F_S$ be the persistent vector space freely generated by $S$.
If $V_{\Ecode}$ is the persistent vector space associated to the barcode $\Ecode$ generated by the Elder Rule, then 
\[
F_S \cong V_{\Ecode}.
\]
\end{thm}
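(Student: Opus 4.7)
The plan is to produce an explicit isomorphism $\Phi : V_{\Ecode} \to F_S$ by constructing, at each time $t$, a basis of $F_S(t)$ whose elements transform under the shift maps exactly as the canonical generators of the summands of $V_{\Ecode} = \bigoplus_i I_{\pi(C_i)}$. Enumerate the Elder chains as $C_1,\ldots,C_m$ in the order produced by Definition~\ref{defn:elder-rule}, and write $\pi(C_i) = [b_i, d_i)$, so that $d_1 = \infty$ while $d_i < \infty$ for $i \geq 2$. For each $i$ let $y_i^0 \in S(b_i)$ be the birth element of $C_i$ and set $y_i(t) := \varphi_{t,b_i}(y_i^0) \in S(t)$ for every $t \geq b_i$; on $[b_i,d_i)$ this is exactly the unique element of $S(t)$ whose lift to $P_S$ sits in $C_i$, while for $t \geq d_i$ it tracks the descendant in whichever older chain has absorbed $C_i$.

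For $i \geq 2$, one next identifies a parent chain $C_{p(i)}$, namely the chain into which $C_i$ is absorbed at its death $d_i$. Because $C_i$ is a maximal chain in the reduced poset $P_i = P_S - (C_1 \cup \cdots \cup C_{i-1})$, the merged image at time $d_i$ must already have been removed, placing it in some earlier chain $C_{p(i)}$; hypothesis~(4) of Definition~\ref{defn:morse-set} then gives exactly two preimages of this merge, $y_i(d_i^-) \in C_i$ and $y_{p(i)}(d_i^-) \in C_{p(i)}$. Since the Elder Rule makes the surviving chain strictly older, $b_{p(i)} < b_i$ and $d_{p(i)} > d_i$. With the parent map in hand, define
\[
e_i(t) := \begin{cases} y_1(t), & i = 1,\ t \geq b_1, \\ y_i(t) - y_{p(i)}(t), & i \geq 2,\ t \in [b_i, d_i), \end{cases}
\]
and let $\Phi_t$ send the canonical generator of $I_{\pi(C_i)}$ at time $t$ to $e_i(t) \in F_S(t)$.

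To verify naturality, fix $r \leq t$ with $r \in [b_i,d_i)$. If $t \in [b_i,d_i)$ as well, functoriality of $S$ gives $\varphi_{tr}(y_i(r)) = y_i(t)$ and $\varphi_{tr}(y_{p(i)}(r)) = y_{p(i)}(t)$, so $\varphi_{tr}(e_i(r)) = e_i(t)$, matching the identity shift inside $I_{\pi(C_i)}$. If instead $t \geq d_i$, the merge at $d_i$ forces $y_i(d_i) = y_{p(i)}(d_i)$, and functoriality propagates this equality to all later times, yielding $\varphi_{tr}(e_i(r)) = y_i(t) - y_{p(i)}(t) = 0$, matching the zero shift out of $I_{\pi(C_i)}$. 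To see that each $\Phi_t$ is a vector-space isomorphism, observe that the Elder chains partition $P_S$, so the fiber $S(t) \times \{t\}$ is partitioned into singletons, giving a bijection $S(t) \leftrightarrow \{i : t \in \pi(C_i)\}$ and matching dimensions on both sides. Ordering the live chains at $t$ by ascending birth time $b_i$ (which places $i=1$ first), the parent $p(i)$ always precedes $i$ in this order and remains alive at $t$, so the matrix expressing $\{e_i(t)\}$ in the natural basis $\{y_i(t)\} = S(t)$ is lower unitriangular and hence invertible.

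The main obstacle is to pin down the two structural facts underlying the construction: (i) for each $i \geq 2$, the parent chain $C_{p(i)}$ is well-defined, strictly older, and outlives $C_i$; and (ii) the chain partition of $P_S$ produced by the Elder Rule restricts to a partition of each fiber $S(t)$. Both rest on a careful unwinding of the inductive removal procedure of Definition~\ref{defn:elder-rule} together with the uniqueness clause in hypothesis~(5) of Definition~\ref{defn:morse-set}, and are cleanest to prove by induction on the Elder Rule index $i$, in parallel. Once these are established, the naturality and unitriangularity checks above are routine linear algebra.
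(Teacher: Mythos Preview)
Your argument is correct and shares the paper's overall strategy---build an explicit natural isomorphism $V_{\Ecode}\to F_S$ by specifying, for each $t$, a basis of $F_S(t)$ indexed by the Elder chains alive at $t$---but the two differ in the crucial choice of which chain to subtract. The paper sends $C_i(t)\mapsto U_i(t)-U_{i-1}(t)$, subtracting the trace of the chain with the immediately preceding Elder-Rule index; you send $C_i(t)\mapsto y_i(t)-y_{p(i)}(t)$, subtracting the trace of the \emph{parent} chain into which $C_i$ actually merges at its death. These agree only when every $C_i$ merges into $C_{i-1}$, i.e.\ when the merge tree is a caterpillar. When it is not---for instance for $\barcode=\{[0,\infty),[1,10),[2,5)\}$ with the tree in which $[2,5)$ attaches directly to $[0,\infty)$, so that $p(3)=1$---the paper's map fails to be natural: for $r\in[2,5)$ and $t\in[5,10)$ one has $\varphi_{tr}\bigl(U_3(r)-U_2(r)\bigr)=U_3(t)-U_2(t)\neq 0$, whereas $\eta_{tr}(C_3(r))=0$, so the case split in the paper's proof does not cover the situation $C_i(t)=0$ with $U_i(t)\neq U_{i-1}(t)$. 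Your use of $p(i)$ in place of $i-1$ is exactly what makes the naturality check go through in general, and your unitriangularity argument (ordering the live chains by birth time so that $p(i)$ always precedes $i$) is cleaner than the paper's re-indexing convention for the $U_i$. The two structural facts you defer follow readily from the inductive Elder-Rule construction together with hypothesis~(5) of Definition~\ref{defn:morse-set}, as you indicate.
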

\begin{proof}
Note that for each $t\in\R$, a basis for $V_{\Ecode}(t)$ is given by the chains $C_i$ supported at $t$.
This basis is also well-adapted to the shift maps internal to $V_{\Ecode}$, because $\eta_{tr}:V_{\Ecode}(r) \to V_{\Ecode}(t)$ always takes $C_i(r)$ to $C_i(t)$, with the understanding that if $C_i(t)=\varnothing$ in $P_S(t)$, then $C_i(t)=0$ in $V_{\Ecode}(t)$.
With this basis the shift map $\eta_{tr}$ is diagonal for every $r\leq t$.

We now turn our attention to the persistent vector space $F_S$. The shift maps $\varphi_{tr}$ do not take chains to chains; rather, they take certain up-sets associated to chains to up-sets with a re-indexing rule.
To introduce some more notation, let $U_i=\{y \in P_S \mid \exists x\in C_i s.t.~\varphi_{tr}(x)=y\}$; this is the up-set associated to the chain $C_i$.
Note that $\{U_i(t)\}$ spans $F_S(t)$ for all $t$, again with the understanding that if $U_i(t)=\varnothing$ in $P_S$, then $U_i(t)=0$ in $F_S(t)$.
The only way in which the set $\{U_i(t)\}$ fails to be a basis is when there exist chains $C_i$ and $C_j$ with associated upsets $U_i$ and $U_j$ that intersect at time $t$ and then the same vector appears twice in the set $\{U_i(t)\}$.
We correct this by adopting the convention that whenever $U_i(t)=U_j(t)$ we choose the lower index $\min\{i,j\}$.
With this choice of basis for all $t$, one can see that the linearized shift maps $\varphi_{tr}$ take each $U_i(r)$ to $U_k(t)$ where $k=\min\{j \mid U_j(t)=U_i(t)\}$.

Using the convention that $C_0=\varnothing$ and hence $U_0=\varnothing$, the change of basis map taking $V_{\Ecode}(t)$ to $F_S(t)$ is defined on chains supported at $t$ by
\[
\beta_t: C_{i}(t) \mapsto U_{i}(t)-U_{i-1}(t) \qquad \text{for} \qquad i=1,\ldots,N,
\]
where $N$ is the number of chains in the decomposition of $P_S$ given by the Elder Rule in Definition~\ref{defn:elder-rule}.
We now need to show that for all $r\leq t$ and $i\in\{1,\ldots, N\}$ that
\begin{equation}\label{eqn:elder-commutes}
\varphi_{tr} \circ \beta_r (C_i(r)) = \beta_t \circ \eta_{tr} (C_i(r)).
\end{equation} 
If $C_i(r)=0$, then there is nothing to check. 
If $C_i(r) \neq 0$, then we distinguish two cases based on $t$. 
If $U_i$ and $U_{i-1}$ intersect at $t$, then $C_i(t)=0$. 
To see this, note that if $U_i(t)=U_{i-1}(t)$, then there is an older chain passing through $U_i(t)$, and hence $C_i$ is not supported at $t$.
In this case, the left hand side of Equation~\ref{eqn:elder-commutes} reads
\[ 
\varphi_{tr} \circ \beta_r (C_i(r)) = \varphi_{tr}(U_i(r) - U_{i-1}(r)) = U_i(t) - U_{i-1}(t) = 0.
\]
This in turn agrees with the right hand side of Equation~\ref{eqn:elder-commutes}:
\[
\beta_t \circ \eta_{tr} (C_i(r)) = \beta_t (C_i(t)) = \beta_t(0) = 0.
\]
The other case, where $U_i(t)\neq U_{i-1}(t)$, implies that $\varphi_{tr}(U_i(r)-U_{i-1}(r))=U_i(t)-U_{i-1}(t)$, which is exactly what $\beta_t(C_i(t))$ is defined to be.
This completes the proof.
\end{proof}


\section{Merge Trees and the Elder Map}

The poset $P_S$ in Definition~\ref{defn:P_S} serves as a total space for any persistent set $S$.
When $S$ is constructible, e.g.~Morse, we can associate a different total space to $S$ called the \define{merge tree} associated to $S$.
Below we define what a merge tree is in the abstract, without reference to persistent path components of a function $f:X \to \R$.

\begin{defn}\label{defn:merge-tree}
A \textbf{merge tree} consists of the following data:
\begin{itemize}
	\item A connected, locally-finite, contractible, one-dimensional cell complex $T$ with a distinguished edge $e_{\infty}$ without compact closure. In other words, if we remove $e_{\infty}$ and consider what remains $T \setminus e_{\infty}$, then this is a compact tree rooted at $v_{\infty}$, which is the one vertex incident to $e_{\infty}$.
	\item A continuous map $\pi: T \to \R$, where $\R$ has the Euclidean topology, that is orientation-preserving in the following sense: 
	\begin{itemize}
		\item $\pi(e_{\infty})=(t_{\infty},\infty)$ where $\pi(v_{\infty})=t_{\infty}$. 
		\item If $\gamma:[0,1] \to T$ is an injective map that starts at a vertex and ends at $v_{\infty}$, then $\pi\circ \gamma:[0,1] \to \R$ is monotonically increasing.
	\end{itemize}
\end{itemize}
Note that the second bullet point implies that every edge $e\subseteq T$ has a \define{length}, given by the diameter of its projection $\pi(e)\subset \R$. 
Also, we can define the \define{child} of a vertex $v\in T$ as any vertex $w$ connected by an edge to $v$ with $\pi(w) < \pi(v)$.
A map from a merge tree $\pi_1: T_1 \to \R$ to a merge tree $\pi_2 :T_2 \to \R$ is a map of spaces $\phi: T_1 \to T_2$ satisfying $\pi_1=\pi_2\circ \phi$, i.e.~this is a morphism in the category $\Top \downarrow \R$.
In the event that the map of spaces $\phi$ is a homeomorphism, we say the merge trees are isomorphic.
Note that this definition implies that if $T$ has an extra vertex that simply subdivides an edge, then it is isomorphic to a merge tree where that vertex is removed.
\end{defn}

\begin{lem}\label{lem:MS-to-MT}
Every Morse set $S$ has an associated merge tree $\pi: T \to \R$.
\end{lem}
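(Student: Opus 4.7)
The plan is to realize the merge tree as a one-dimensional cell complex glued from one edge per element of $S$ on each subinterval $[\tau_i,\tau_{i+1}]$, using the shift maps to prescribe the identifications at the top endpoints. Concretely, I would form
\[
\widetilde{T} \;=\; \{\ast\} \times [\tau_n,\infty) \;\sqcup\; \bigsqcup_{i=1}^{n-1} S(\tau_i) \times [\tau_i,\tau_{i+1}]
\]
and define $T = \widetilde{T}/{\sim}$ by identifying $(x,\tau_{i+1}) \sim (\varphi_{\tau_{i+1},\tau_i}(x),\tau_{i+1})$ for each $x\in S(\tau_i)$; the case $i=n-1$ produces identifications with the initial point of the ray, since $S(\tau_n)=\{\ast\}$. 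The projection $\pi\colon T\to\R$ is induced by $(x,t)\mapsto t$, which is well-defined because every identification preserves the $t$-coordinate. The resulting $T$ carries a natural CW structure whose $0$-cells are the equivalence classes of the points $(x,\tau_i)$ and whose $1$-cells are the interiors of the strips together with $e_\infty := \{\ast\}\times(\tau_n,\infty)$.

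I would then verify the structural requirements from Definition~\ref{defn:merge-tree} in turn. One-dimensionality, local finiteness, and the monotonicity of $\pi$ along each edge are immediate, since each $S(\tau_i)$ is finite by hypothesis and every strip is linearly parameterized over its interval. Connectedness follows by reverse induction on $i$: the ray is connected, and at each step the new strip $S(\tau_i)\times[\tau_i,\tau_{i+1}]$ has every upper endpoint identified with a point of the already-connected piece $\pi^{-1}([\tau_{i+1},\infty))$. For contractibility, the decisive observation is that each point $(x,t)$ admits a unique monotone-upward trajectory to the root $v_\infty := (\ast,\tau_n)$, obtained by iterating the shift maps $\varphi_{\tau_{i+1},\tau_i}$; this gives an explicit deformation retraction of $T$ onto $v_\infty$ and, equivalently, shows that no cycles are created by the gluings.

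Finally I would identify the distinguished edge $e_\infty = \{\ast\}\times(\tau_n,\infty)$ and note that $T\setminus e_\infty$ is a finite union of closed strips modulo finitely many endpoint identifications, hence compact and rooted at $v_\infty$. The main subtlety is articulating contractibility carefully: one must use hypothesis~(4) of Definition~\ref{defn:morse-set}, which caps the fibers of $\varphi_{\tau_{i+1},\tau_i}$ at $2$, to ensure that the backward-in-time gluings only \emph{merge} edges rather than split them, thereby preserving the unique-upward-path property. Bivalent vertices (arising from fibers of size $1$) are harmless, since the last sentence of Definition~\ref{defn:merge-tree} identifies any such $T$ with the merge tree obtained by deleting them, so the construction yields a well-defined merge tree associated to $S$.
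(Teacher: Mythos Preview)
Your construction is exactly the one the paper gives: form the disjoint union of strips $S(\tau_i)\times[\tau_i,\tau_{i+1}]$ together with the ray $\{\ast\}\times[\tau_n,\infty)$, glue top endpoints via the shift maps, and project to the second coordinate. The paper stops after describing $T$ and $\pi$, whereas you go on to check the merge-tree axioms of Definition~\ref{defn:merge-tree} explicitly; that extra verification is fine and welcome.

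One small correction: contractibility and the unique-upward-path property do \emph{not} depend on hypothesis~(4) of Definition~\ref{defn:morse-set}. The upward trajectory from any $(x,t)$ is unique simply because each $\varphi_{\tau_{i+1},\tau_i}$ is a \emph{function}, so each point is glued to exactly one point above it; this already rules out any ``splitting'' and yields a tree regardless of fiber sizes. Hypothesis~(4) only caps the in-degree at each internal vertex at~$2$, which is what makes $T$ a \emph{Morse} tree (Definition~\ref{defn:Morse-tree}) rather than merely a merge tree, and is not needed for the present lemma.
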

\begin{proof}
We first form the disjoint union
$$S(t_1) \times [\tau_1,\tau_2] \sqcup \cdots \sqcup S(\tau_n) \times [t_n,\infty)$$
and then we impose the equivalence relation that $(x,\tau_{i+1}) \in S(\tau_i)\times \{\tau_{i+1}\}$ is identified with $(y,\tau_{i+1})\in S(\tau_{i+1})\times \{\tau_{i+1}\}$ if and only if $\varphi_{i+1,i}(x)=y$.
This defines the total space $T$.
Projection onto the second factor defines the map $\pi: T \to \R$.
\end{proof}

It is clear that the hypotheses of a Morse set make $T$ into a binary tree.
We set this aside as a special definition.

\begin{defn}\label{defn:Morse-tree}
If $\pi: T\to \R$ is the merge tree associated to a Morse set, then we call it a \define{Morse tree}.
Let $\mathcal{T}$ denote the set of isomorphism classes of Morse trees.
\end{defn}

\begin{lem}\label{lem:morse-set-tree-equiv}
The set of Morse sets $\mathcal{S}$ and the set of Morse trees $\mathcal{T}$ are isomorphic.
\end{lem}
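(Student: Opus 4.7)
The plan is to produce an inverse to the map $\Phi : \mathcal{S} \to \mathcal{T}$ provided by Lemma~\ref{lem:MS-to-MT}. Surjectivity of $\Phi$ on isomorphism classes is tautological, since Definition~\ref{defn:Morse-tree} defines a Morse tree to be any merge tree arising from a Morse set. The real work is to construct an extraction map $\Psi : \mathcal{T} \to \mathcal{S}$ and verify that the two compositions agree with the identity up to the appropriate notion of isomorphism on each side.

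Given a Morse tree $\pi : T \to \R$, after absorbing any degree-two subdivision vertices as permitted by the remark at the end of Definition~\ref{defn:merge-tree}, I would let its vertex heights be $\tau_1 < \cdots < \tau_n$ and set $\Psi(T)(t) := \pi^{-1}(t)$. This is a finite set because $T$ has only finitely many edges (being constructed from a Morse set with finitely many critical values and finite fibers). For $r \leq t$, the shift map sends $x \in \pi^{-1}(r)$ to the unique point at height $t$ on the monotone path in $T$ from $x$ to $v_\infty$; existence and uniqueness follow from connectedness, contractibility, and the orientation condition on $\pi$.

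I would then verify the five clauses of Definition~\ref{defn:morse-set}: clauses (1)--(3) follow from the shape of $T$ (empty below $\tau_1$, constant between consecutive $\tau_i$, and a single ray above $v_\infty$); clause (4) follows because each non-root vertex of $T$ is either a leaf (fiber cardinality $0$) or a binary merge (cardinality $2$); and clause (5) is inherited from the Morse set of origin, since its leaves have pairwise distinct $\pi$-values, so each $y \in \pi^{-1}(t)$ is contained in a unique oldest maximal chain of $P_{\Psi(T)}$.

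Finally, to check $\Psi \circ \Phi \cong \id_{\mathcal{S}}$, the fiber of $\Phi(S)$ over $t \in [\tau_i, \tau_{i+1})$ is canonically $S(\tau_i) \times \{t\} \cong S(t)$, and the equivalence relation imposed in Lemma~\ref{lem:MS-to-MT} encodes exactly the shift maps $\varphi_{i+1,i}$, yielding a natural isomorphism $S \Rightarrow \Psi(\Phi(S))$. For $\Phi \circ \Psi \cong \id_{\mathcal{T}}$, the reassembly of the cylinders $\pi^{-1}(\tau_i) \times [\tau_i, \tau_{i+1}]$ with the identifications coming from $T$'s edges is, by construction, homeomorphic to $T$ via the evident projection-preserving map. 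The main obstacle I anticipate is the book-keeping that translates between the two notions of isomorphism, natural isomorphisms of $\set$-valued functors versus projection-preserving homeomorphisms of one-dimensional cell complexes; both reduce to comparing fiberwise bijections compatible with shift maps, which is exactly the data of a projection-preserving homeomorphism determined by its restriction to the discrete set of critical levels.
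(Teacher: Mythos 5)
Your proposal is correct and follows essentially the same route as the paper: the paper also sketches an inverse by recovering the persistent set from the tree (it uses $S'(t)=\pi_0(\pi^{-1}(-\infty,t])$ and the deformation retraction onto the fiber $\pi^{-1}(t)$, which is exactly your $\Psi$), and likewise leaves the remaining bookkeeping to the reader. Your version merely spells out more of the details (verification of the five clauses and the two composite identifications) that the paper omits.
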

\begin{proof}
Note that Morse sets constitute a subcategory of $\Pers(\set)$ and Morse trees constitute a subcategory of the over category $\Top \downarrow \R$.
The lemma follows from the more general statement that these subcategories are equivalent, which implies they have isomorphic sets of isomorphism classes. We now described the main ideas of the equivalence and leave the remaining details to the reader. 

Consider the merge tree $\pi: T \to \R$ associated to a Morse set $S$. 
Naturally associated to $T$ is a persistent set $S'(t):=\pi_0(\pi^{-1}(-\infty,t])$.
Since every sub-level set $\pi^{-1}(-\infty,t]$ deformation retracts onto $\pi^{-1}(t)$, which is $S(t)$, we have that $S'(t)\cong S(t)$.
\end{proof}

Lemma~\ref{lem:morse-set-tree-equiv} implies that to every Morse tree we can associate a barcode, via the Elder Rule.

\begin{defn}\label{defn:tree-to-barcode}
Recall that $\BCspace$ denotes the set of all possible barcodes.
The Elder Rule defines the \define{Elder map} from the set of (isomorphism classes of) Morse Trees $\MTspace$ to the set of barcodes $\BCspace$:
$$\Xi:\MTspace \to \BCspace$$
\end{defn}

\begin{rmk}[The Elder Map is Lipschitz]\label{rmk:lipschitz-elder-map}
The Elder map is also $1$-Lipschitz, and hence continuous, by using the \define{interleaving distance} on merge trees~\cite{morozov2013interleaving}.
\end{rmk}

\begin{rmk}[A Sheaf-Theoretic Aside]\label{rmk:elder-sheaf}
Let $\pi: T \to \R$ be a Morse tree.
Let $\Bbbk_T$ be the constant sheaf on $T$.
Consider the pushforward $\pi_*\Bbbk_T$ along the map $\pi:T \to \R$, which is constructible with respect to the stratification of $\R$ given by the set of times $\tau$ used in the definition of a Morse set.
Using results of~\cite{curry-patel-CCC}, we can describe this sheaf in terms of a zig-zag module, which in this case is a sequence of vector spaces and maps of the form
	\[
	F_S(\tau_1) \leftarrow F_S(\tau_1) \rightarrow F(\tau_2) \leftarrow \cdots \rightarrow F_S(\tau_n) \leftarrow F_S(\tau_n)
	\]
We note that maps pointing to the left are always invertible, so we can regard this is as a persistent vector space.
Consequently, appealing to Theorem~\ref{thm:elder-rule}, the Elder Rule provides a decomposition of the pushforward of the constant sheaf along $\pi$ into indecomposable sheaves over $\R$.
\end{rmk}

\begin{figure}[h]
\centering
\includegraphics[width=.7\textwidth]{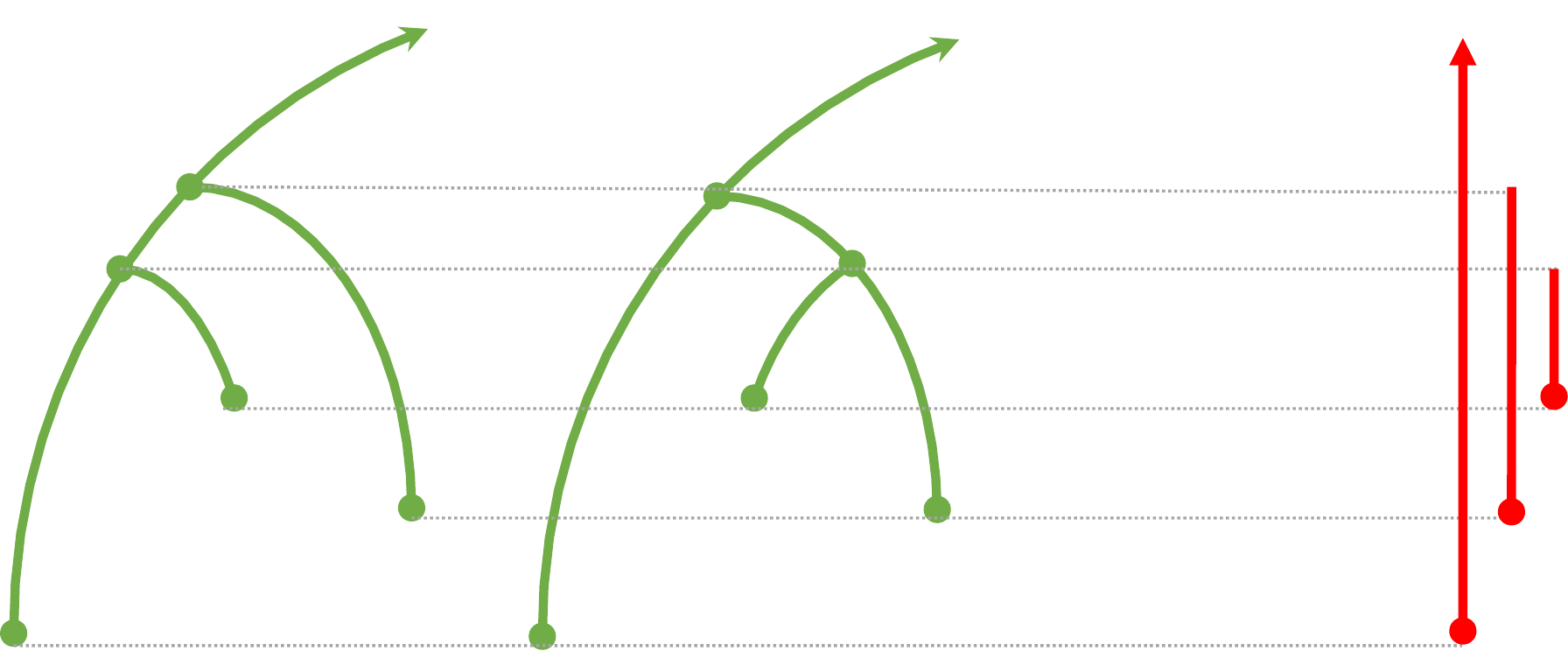}
\caption{Fiber of the Elder Map over a Barcode}
\label{fig:MT-fiber-example}
\end{figure}

We now describe the fiber of the Elder map $\Xi: \MTspace \to \BCspace$.

\begin{thm}\label{thm:counting-merge-trees}
Let $\barcode=\{I_j\}_{j=1}^N$ be a barcode where $I_1=[b_1,\infty)$ and where for $j\geq 2$ we have a strict containment $I_j=[b_j,d_j)\subset I_1$ with $d_j > d_{j+1}$.
We denote the set of intervals in $\barcode$ containing $I_{j}$ by $C_{\barcode}(I_j):=\{I_k\in\barcode \mid I_k \supset I_j \}$ and let $\mu_{\barcode}(I_j)$ denote the cardinality of this set.
The number of merge trees realizing the barcode $\barcode$ is
\[
|\Xi^{-1}(\barcode)| = \prod_{j=2}^N \mu_{\barcode}(I_j).
\]
\end{thm}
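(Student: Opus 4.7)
The plan is to induct on the number of intervals $N$. The base case $N=1$ has $\barcode = \{I_1 = [b_1,\infty)\}$, realized by the unique merge tree consisting of a single leaf at height $b_1$ together with the distinguished infinite edge; the product on the right is empty and equals $1$, matching $|\Xi^{-1}(\barcode)|=1$.

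For the inductive step, I would single out the youngest interval $I_N$, which by the ordering $d_j > d_{j+1}$ has the smallest death time, and establish a bijection
\[
\Xi^{-1}(\barcode) \;\longleftrightarrow\; \Xi^{-1}(\barcode \setminus \{I_N\}) \times C_\barcode(I_N).
\]
Given a pair $(T', I_k)$, I would construct $T$ by inserting a new degree-$3$ vertex at height $d_N$ into the edge of $T'$ lying along the Elder-Rule chain of $I_k$, and attaching to this vertex a fresh leaf at height $b_N$; the strict containment $I_k \supsetneq I_N$ yields $b_k < b_N < d_N < d_k$, so the insertion point lies strictly inside the $I_k$-chain. Conversely, given $T \in \Xi^{-1}(\barcode)$, the Elder-Rule chain of $I_N$ is a single leaf edge terminating at a merge vertex at height $d_N$; the other chain passing through that vertex is some $I_k$ which is strictly older and still alive at $d_N$, hence $I_k \in C_\barcode(I_N)$, and deleting the $I_N$-leaf and its edge while absorbing the resulting degree-$2$ vertex produces the tree $T' \in \Xi^{-1}(\barcode \setminus \{I_N\})$. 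Because $I_N$ does not strictly contain any other interval (having the smallest death time), $\mu_{\barcode \setminus \{I_N\}}(I_j) = \mu_\barcode(I_j)$ for all $j < N$, and multiplying the inductive product by $|C_\barcode(I_N)| = \mu_\barcode(I_N)$ gives the desired formula.

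The main obstacle is verifying that these two constructions are mutually inverse, i.e., that the Elder Rule on the modified tree reproduces the correct barcode. The key observations are: (i) above height $d_N$, the trees $T$ and $T'$ are identical, so the chain decompositions and projected intervals coincide there; (ii) at the inserted merge vertex the chain starting at the $b_N$-leaf is strictly younger than the $I_k$-chain (because $b_k < b_N$), so by Theorem~\ref{thm:elder-rule} it terminates there and projects to $I_N$; and (iii) the $I_k$-chain merely passes through the new merge vertex, leaving its projected interval unchanged. Combined with the uniqueness of the oldest maximal chain through each leaf guaranteed by the fifth hypothesis of Definition~\ref{defn:morse-set}, these observations show that the Elder-Rule chain decomposition of $T$ is obtained from that of $T'$ by appending a single chain projecting to $I_N$, which closes the induction.
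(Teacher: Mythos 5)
Your proof is correct and takes essentially the same approach as the paper: the paper builds the tree $T_1\subset T_2\subset\cdots\subset T_N$ by attaching the intervals in order of decreasing death time, with $\mu_{\barcode}(I_{j+1})$ choices of attachment point at each step, which is exactly your induction read in the opposite direction (attaching the youngest interval $I_N$ to a tree realizing $\barcode\setminus\{I_N\}$). The only cosmetic difference is that the paper certifies that distinct choices of $I_k$ yield non-isomorphic trees via the resulting edge length $d_k-d_{j+1}$, whereas you recover $I_k$ from $T$ via the Elder-Rule chain through the parent of the $I_N$-leaf; both are valid.
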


\begin{proof}
We construct a Morse tree $\pi:T \to \R$ inductively from $\barcode$ and enumerate all the possible choices along the way.
We note that the intervals in $\barcode$ are already ordered by their right-hand endpoints via their index $j$.
This is why we start by setting $T_1$ to be the interval $I_1=[b_1,\infty)$, equipped with the Euclidean topology.

To construct $T_{j+1}$ from $T_j$ we must select a place to attach the interval $I_{j+1}\in\barcode$ to $T_j$.
Note that every interval $I_k \in C_{\calB}(I_{j+1})$ has a right-hand endpoint that is to the right of $d_{j+1}$ and hence has already been used in the construction of $T_j$.
Once we select an $I_k \in C_{\calB}(I_{j+1})$---and we note that there were $\mu_{\barcode}(I_{j+1})$ choices---we can define
$$T_{j+1} := \left( T_j \sqcup \bar{I}_{j+1} \right)/\sim_k$$
where $\bar{I}_{j+1}=[b_{j+1},d_{j+1}]$.
The equivalence relation $\sim_k$ identifies the point $d_{j+1}\in I_k$ with the right-hand endpoint $d_{j+1} \in \bar{I}_{j+1}$.
Once $j=N$, we will have constructed a tree $T$ using the intervals in $\barcode$.
The map $\pi:T \to \R$ is simply the map that takes each $t\in I_k$ to $t\in \R$.

Note that every possible merge tree constructed in this way picks out a unique isomorphism class.
To see this, observe that if we have chosen $I_{k'}\neq I_k \in C_{\calB}(I_{j+1})$, then by hypothesis $d_{k'}\neq d_k$.
Attaching $I_{j+1}$ at $d_{j+1}\in I_{k'}$ produces an edge of length $d_{k'}-d_{j+1}$, which is an isomorphism invariant. This completes the proof of the theorem.
\end{proof}

\begin{rmk}
Note that any Morse function on a compact manifold, where we assume that Morse functions have distinct critical values for each critical point, will have a barcode in homological degree zero that satisfies the hypotheses of Theorem~\ref{thm:counting-merge-trees}.
Since Morse functions are dense in the space of all functions on a compact manifold, the assumptions on the barcode are generic.
\end{rmk}

\begin{ex}
In Figure~\ref{fig:MT-fiber-example} we have a barcode, depicted to the right.
Up to isomorphism, there are only two merge trees that realize this barcode, which agrees with the formula given in Theorem~\ref{thm:counting-merge-trees}.
\end{ex}

\begin{figure}[h]
\centering
\includegraphics[width=.9\textwidth]{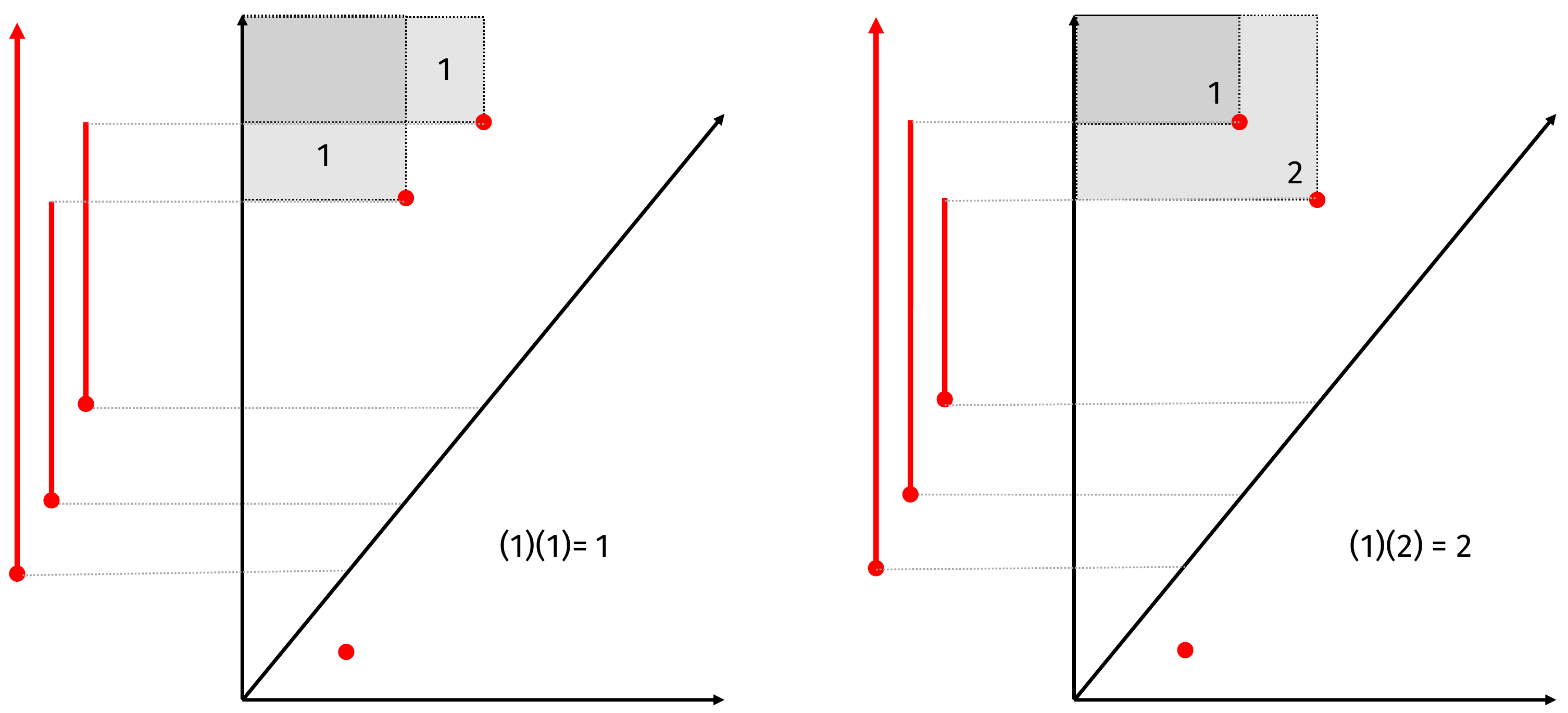}
\caption{Two barcodes that live in different strata of $\BCspace$.}
\label{fig:config-count}
\end{figure}

\begin{rmk}[Stratifying the Elder Map]\label{rmk:stratified-elder-map}
By using the persistence diagram perspective, we can think of barcodes in terms of configurations of points in $\mathbb{E}^2$. 
Note that the formula given in Theorem~\ref{thm:counting-merge-trees} defines a constructible function (see~\cite{schapira1991operations,euler} for more on constructible functions), which is constant on strata in a stratification of the space of barcodes $\BCspace$.
This stratification is defined using the number of points in the persistence diagram and the containment relations used to define $C_{\barcode}(I_j)$, which can be phrased in terms of linear inequalities.
For example, in Figure~\ref{fig:config-count} we see two barcodes, viewed as persistence diagrams, lying in different strata.
If we moved the upper most point in the right persistence diagram out of the shadow of the lower point, then we would move from one piece of the stratification of $\BCspace$ to another piece.
Strata are indexed by isomorphism classes of the \define{containment relation poset} associated to $\barcode$, which is the poset on $\barcode=\{I_j\}$ given by containment of intervals.
It appears that every partially ordered set of order dimension two with $N$ elements (and a unique maximal element) can be realized as the containment relation poset of a barcode $\barcode$ satisfying the hypotheses of Theorem~\ref{thm:counting-merge-trees} by embedding the Hasse diagram in the plane so that comparable elements are always up and to the left.
See~\cite{bayoumi1994counting} for an enumeration of order dimension two posets and hence (assuming the previous observation is true) strata of the space of barcodes.
\end{rmk}



\section{Ordered Persistent Sets and Chiral Merge Trees}

Suppose $f:[0,1] \to \R$ is a continuous map to $\R$, in the Euclidean topology.
For every real number $t\in \R$ we note that $f^{-1}(\infty,t]$ can be written as a disjoint union of closed intervals.
Because $[0,1]$ is totally ordered, we can order the intervals appearing in $f^{-1}(\infty,t]$ from left to right.
This implies that the persistent path components of $f$ are actually organized by a richer structure.

\begin{defn}
Let $\Ord$ denote the category of totally-ordered sets and order-preserving maps.
An \define{ordered persistent set} is any functor $W:(\R,\leq) \to \Ord$.
A map of ordered persistent sets is simply a natural transformation of functors.
\end{defn}

\begin{ex}
If $f:[0,1] \to \R$ is a continuous map, then set $W(t):=\pi_0(f^{-1}(-\infty, t])$ is totally ordered using the left-to-right ordering of the intervals making up the pre-image $f^{-1}(-\infty, t]\subseteq [0,1]$.
\end{ex}

This example begets the notion of an \define{ordered merge tree} and it's generic version, the \define{chiral merge tree}.
At the end of this section we describe what maps of these objects are.

\begin{defn}
An \define{ordered merge tree} is a merge tree $\pi: T \to \R$ with the additional data of specifying a total order $\{1,\ldots, n_v\}$ on the edges connecting $v$ to its children.
If a vertex $v\in T$ has two children $w_1$ and $w_2$, then an order amounts to an assignment of a left $L$ and right $R$ to the two edges connecting $v$ to its children, where we use the convention that $L<R$.
If every vertex in $T$ has two children, then we call an ordered merge tree a \define{chiral merge tree}.
Let $\CMTspace$ denote the set of all chiral merge trees.
\end{defn}

\begin{ex}
Suppose $f:[0,1] \to \R$ is a piece-wise linear (PL) function where every critical point has a distinct critical value.
Here a critical point $p\in [0,1]$ means that $p$ is either a local minimum or a local maximum, which is characterized by the existence of an open neighborhood $U\ni p$ for which either $f(p)\leq f(x)$ or $f(x) \leq f(p)$ for all $x\in U$.
The merge tree associated to $f$, possibly after removing unnecessary vertices, then has the structure of a chiral merge tree since at most two intervals merge when crossing a critical value associated to a local maximum.
\end{ex}

Defining the \define{Chiral Elder Map} via the composition
\[
\Xi_{\CMTspace} :\CMTspace \to \MTspace \to \BCspace
\]
we have the following corollary of Theorem~\ref{thm:counting-merge-trees}.

\begin{cor}\label{cor:counting-CMTs}
Let $\barcode$ be a barcode satisfying the hypotheses of Theorem~\ref{thm:counting-merge-trees}.
The number of chiral merge trees realizing $\barcode$ is
\[
|\Xi_{\CMTspace}^{-1}(\barcode)| = 2^{N-1}\prod_{j=2}^N\mu_{\barcode}(I_j).
\]
\end{cor}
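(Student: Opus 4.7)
The plan is to factor the chiral Elder map $\Xi_{\CMTspace}$ through the forgetful map $U:\CMTspace \to \MTspace$ (which sends a chiral merge tree to its underlying unordered merge tree) and count fibers in two stages. First I would invoke Theorem~\ref{thm:counting-merge-trees} to record that $|\Xi^{-1}(\barcode)|=\prod_{j=2}^N \mu_{\barcode}(I_j)$; since $\Xi_{\CMTspace}=\Xi \circ U$, it then suffices to show that the fiber $U^{-1}(T)$ has exactly $2^{N-1}$ elements for every Morse tree $T \in \Xi^{-1}(\barcode)$.

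Second, I would count the internal vertices of such a $T$. Tracing the inductive construction in the proof of Theorem~\ref{thm:counting-merge-trees}, each attachment of $I_{j+1}$ (for $j=1,\ldots,N-1$) creates exactly one new trivalent vertex of $T$ and introduces no other branching, so $T$ has precisely $N-1$ internal vertices, each with two children by the binary nature of a Morse tree (see Lemma~\ref{lem:MS-to-MT} and the remark preceding Definition~\ref{defn:Morse-tree}). A chiral enrichment of $T$ is exactly an independent assignment of $L/R$ at each such vertex, yielding $2^{N-1}$ candidate lifts of $T$ into $\CMTspace$.

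Third, I would verify that these $2^{N-1}$ candidates are pairwise non-isomorphic as chiral merge trees. Under the genericity implicit in the hypotheses (and coming from the Morse-set assumption that all critical values are distinct), every vertex of $T$ sits at a unique height. Any self-homeomorphism $\phi:T\to T$ with $\pi\circ\phi=\pi$ must therefore fix each vertex, and hence equals the identity on $T$. In particular, no automorphism of $T$ swaps the two children at any internal vertex, so distinct $L/R$ assignments produce distinct isomorphism classes in $\CMTspace$. Multiplying across the two stages gives
\[
|\Xi_{\CMTspace}^{-1}(\barcode)|=|U^{-1}(T)|\cdot|\Xi^{-1}(\barcode)|=2^{N-1}\prod_{j=2}^N \mu_{\barcode}(I_j).
\]

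I expect the rigidity step to be the main delicacy: if one allowed coincident birth times, some merge trees would acquire non-trivial automorphism groups identifying pairs of chiralities, and the naive product would overcount. Under the generic hypotheses in force, however, the height-injectivity of the vertices makes rigidity immediate, the forgetful fiber is cleanly of size $2^{N-1}$, and the stated product count is exact.
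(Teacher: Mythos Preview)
Your proof is correct. The paper takes a closely related but differently organized route: instead of factoring $\Xi_{\CMTspace}=\Xi\circ U$ and computing the two fibers separately, it simply reruns the inductive construction from the proof of Theorem~\ref{thm:counting-merge-trees} directly in $\CMTspace$, noting that each attachment of $I_{j+1}$ now offers $2\mu_{\barcode}(I_{j+1})$ options---a containing interval $I_k\in C_{\barcode}(I_{j+1})$ \emph{and} a side $L$ or $R$. Your factorization has the virtue of isolating the purely chiral factor $2^{N-1}$ and of making explicit the rigidity step (that distinct $L/R$ assignments on a fixed $T\in\Xi^{-1}(\barcode)$ yield non-isomorphic chiral merge trees), which the paper's one-line argument leaves implicit. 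The paper's version is terser and avoids separately counting internal vertices, since the binary choice is folded into each inductive step. Both arguments rest on the same underlying fact---that each of the $N-1$ merge events contributes an independent binary choice---so the difference is one of packaging rather than substance. Your closing remark about coincident birth times is apt: it is exactly the Morse-set genericity (distinct critical values, hence distinct vertex heights) that forces $\mathrm{Aut}(T)$ to be trivial and makes the $2^{N-1}$ count exact.
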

\begin{proof}
The proof follows the proof of Theorem~\ref{thm:counting-merge-trees} with the exception that instead of there being $\mu_{\barcode}(I_{j+1})$ possibilities for attaching $I_{j+1}$ to $T_j$, there are now $2\times \mu_{\barcode}(I_{j+1})$ possibilities, since we may attach $I_{j+1}$ to the left or to the right of every interval appearing in $C_{\barcode}(I_{j+1})$.
\end{proof}

\begin{ex}
Consider the barcode
\[
\barcode=\left\{[1,\infty), [2,7), [3,6), [4,5)\right\}.
\]
Corollary~\ref{cor:counting-CMTs} predicts that there are $2^3\times 1\times 2\times 3=48$ chiral merge trees realizing $\barcode$.
\end{ex}

\subsection{Maps of Ordered and Chiral Merge Trees}

We can endow the collection of ordered merge trees, and hence the collection of chiral merge trees, with the structure of a category by defining them to be full subcategories of the category of ordered persistent sets.
To see this, we prove an easy lemma.

\begin{lem}\label{lem:OMT-to-OPS}
Every ordered merge tree $\pi:T \to \R$ defines an ordered persistent set.
\end{lem}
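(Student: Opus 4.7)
The plan is to upgrade the persistent set $W(t) := \pi_0(\pi^{-1}(-\infty,t])$ already associated to $T$ by Lemma~\ref{lem:morse-set-tree-equiv} to a functor valued in $\Ord$, by using the per-vertex orderings to equip each $W(t)$ with a total order and by checking that the shift maps are order-preserving. The key construction is as follows: given two distinct components $[x],[y] \in W(t)$, the unique injective paths in $T$ from $x$ and from $y$ to $v_\infty$ (which exist because $T$ is contractible) are both monotonically increasing under $\pi$ and must eventually coincide, so they meet at a well-defined earliest common vertex $v_{xy}$ with $\pi(v_{xy}) > t$. At $v_{xy}$ the two paths enter through two \emph{different} children (otherwise they would have merged strictly earlier), and the given order on the children of $v_{xy}$ lets us declare $[x] < [y]$ precisely when the $x$-path enters via the smaller child. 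Note that this definition is independent of the choice of representatives $x \in [x]$ and $y \in [y]$, since any two points in the same component of $\pi^{-1}(-\infty,t]$ give the same path upward from their common ancestor.

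Next I would verify that this relation is a total order. Antisymmetry and totality are immediate from the construction and the fact that the children of $v_{xy}$ are strictly totally ordered. Transitivity is the only step that takes real work, and I regard it as the main (mild) obstacle. Given $[x] < [y]$ and $[y] < [z]$, consider the three pairwise earliest common ancestors; in the tree $T$ these assemble into a tripod configuration so that exactly one of the three pairs $\{x,y\},\{x,z\},\{y,z\}$ meets first, and the other two share their common ancestor $v_{xyz}$ with the third point. A short case analysis on which pair meets first rules out the ``$\{x,z\}$ meets first'' configuration (it would force $[x]<[y]$ and $[y]<[z]$ to disagree on the position of $y$ at $v_{xyz}$) and shows that in the other two cases the comparison $[x] < [z]$ follows from the ordering at $v_{xyz}$.

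Finally I would check functoriality: for $r \leq t$, the shift map $\varphi_{tr}: W(r) \to W(t)$ sends $[x]_r$ to $[x]_t$, and I need to show it is order-preserving. Take $[x]_r < [y]_r$ in $W(r)$, and let $v_{xy}$ be their earliest common ancestor as above. If $\pi(v_{xy}) \leq t$ then $[x]_t = [y]_t$, so order is trivially preserved; otherwise $v_{xy}$ remains the earliest common ancestor at level $t$ and the very same comparison at $v_{xy}$ gives $[x]_t < [y]_t$. Hence $W$ factors through $\Ord$, producing the desired ordered persistent set. (As a sanity check, if one draws $T$ in the upper half-plane with children of each vertex placed in the specified left-to-right order, this abstract order on $W(t)$ coincides with the obvious left-to-right order on the horizontal slice $\pi^{-1}(t)$, matching the motivating example of $f:[0,1]\to\R$.)
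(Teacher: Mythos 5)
Your proposal is correct and follows essentially the same route as the paper: order two components at level $t$ by locating their least common ancestor and comparing the children of that vertex through which their paths pass, then check that shift maps preserve this order because the relevant paths at a later time are restrictions of the earlier ones. Your explicit verification of transitivity (via the tripod of pairwise ancestors) is a detail the paper leaves implicit, and is a welcome addition; just note that for a general ordered merge tree all three points may meet at a single vertex through three distinct children, in which case transitivity is inherited directly from the total order on that vertex's children.
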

\begin{proof}
Given an ordered merge tree $\pi: T \to \R$ we can take two points $x_1$ and $x_2$ in the fiber $\pi^{-1}(t)$ and order them as follows: First, we consider their least common ancestor $v\in T$; this is the unique vertex with the lowest function value that specifies a connected sub-tree of $T$ that contains $x_1$ and $x_2$. 
The least common ancestor can alternatively be viewed as the lowest intersection point of the up-sets generated by $x_1$ and $x_2$ when one regards a merge tree as a persistent set and uses the partial order described in Definition~\ref{defn:P_S}.
Second, we consider the unique shortest paths $\gamma_1$ and $\gamma_2$ connecting $x_1$ and $x_2$ to $v$, respectively.
Since these paths necessarily pass through distinct children $w_1$ and $w_2$ of $v$ we use the total order on these children of $v$ to order $x_1$ and $x_2$.
To finish the proof, it is easy to see that if $t< t'$, then the map that takes points in the fiber over $t$ to points in the fiber over $t'$ is also order preserving: if $x_1$ and $x_2$ have a common image in the fiber over $t'$, then we're done and if not, then the path joining their images to their least common ancestor $v$ is a restriction of the paths from $x_1$ and $x_2$ to $v$.
\end{proof}

We can now define a map of ordered merge trees to be a natural transformation between the associated ordered persistent sets, as given by Lemma~\ref{lem:OMT-to-OPS}.

\begin{rmk}[Metrics for OMTs]\label{rmk:metrics-for-OMTs}
Moreover, we can use the notion of \define{interleavings} to define an extended pseudo-metric on ordered merge trees, see~\cite{morozov2013interleaving} for the application of the interleaving distance to (un-ordered) merge trees, \cite{bubenik2015metrics} for the simplest construction of the interleaving distance for persistence modules valued in arbitrary categories $\cat$, and~\cite{de2018theory} for a more subtle variant of the interleaving distance.
It should be noted that defining metrics using ordered or labeled merge trees is not entirely new and was developed in part by Elizabeth Munch and Anastasios Stefanou in~\cite{Munch2018}.
\end{rmk}


\section{The Persistence Map for Functions on the Interval}

Using the theory already developed, we can now characterize the fiber of the persistence map for suitably nice functions on the interval.
Obviously, the fiber of the persistence map
\[
PH_0 : \qquad f:[0,1] \to \R  \qquad \rightsquigarrow \qquad \barcode(F_0) 
\]
is uncountable.
However, if we introduce an equivalence relation called \define{graph-equivalence} and impose boundary conditions on the functions, then the fiber of the persistence map becomes finite and is indexed by chiral merge trees; see Figure~\ref{fig:ge-cmt}.

\begin{defn}\label{defn:equiv}
We say two continuous functions $f,g : [0,1] \to \R$ are \textbf{graph-equivalent} if there is an orientation preserving homeomorphism $\phi: [0,1] \to [0,1]$ such that $f=g \circ\phi$.
\end{defn}

\begin{figure}[h]
\centering
\includegraphics[width=.9\textwidth]{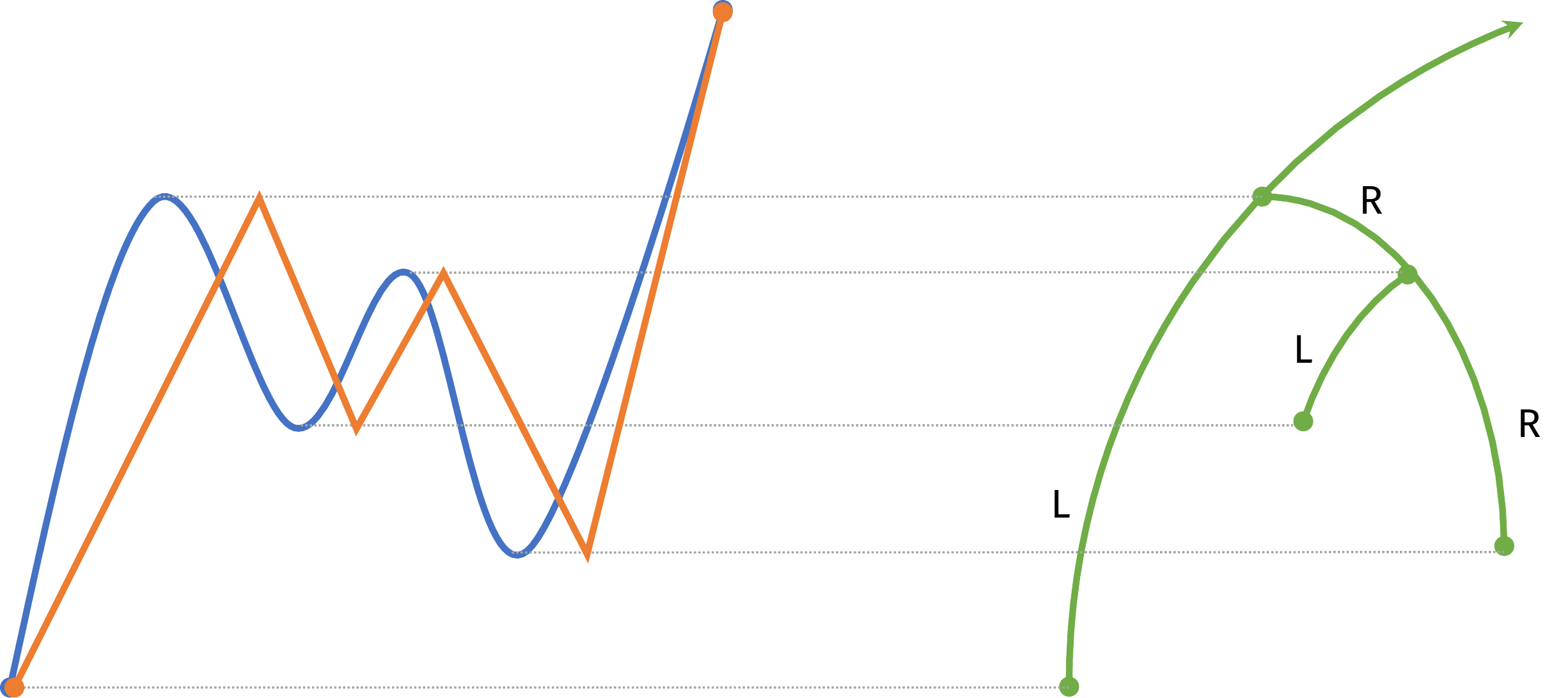}
\caption{Two Graph Equivalent Functions and their CMT}
\label{fig:ge-cmt}
\end{figure}

As the name suggests, if two functions are graph-equivalent then their graphs
$\Gamma_f = \{(x,f(x)) \,|\, x\in [0,1] \}$
and
$\Gamma_g = \{(x',g(x')) \,|\, x'\in [0,1] \}$
are homeomorphic in a level-set and orientation-preserving way.
To see this, note that 
$$\phi \times \id_{\R} (x,f(x)) = (\phi(x),f(x)) = (\phi(x),g(\phi(x))).$$
This implies that the sub-level sets of $f$ and $g$ are homeomorphic for every $t$, so in particular the persistent vector spaces
\[
	F(t):=H_0(f^{-1}(-\infty,t]) \qquad \text{and} \qquad G(t):=H_0(g^{-1}(-\infty,t])
\]
are isomorphic, so this equivalence relation is constant on fibers of the persistence map.
However, the fact that $\phi$ is orientation-preserving also makes the following true.

\begin{cor}\label{cor:ge-cmt}
If two functions $f,g: [0,1] \to \R$ are graph-equivalent, then they have isomorphic chiral merge trees.
\end{cor}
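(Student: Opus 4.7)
The plan is to upgrade the homeomorphism $\phi:[0,1]\to[0,1]$ witnessing graph-equivalence to an isomorphism of the associated ordered persistent sets of path components, and then invoke the correspondence between ordered merge trees and ordered persistent sets. First, I would record the basic observation that $f = g\circ\phi$ forces
\[
f^{-1}(-\infty,t] \;=\; \phi^{-1}\bigl(g^{-1}(-\infty,t]\bigr)
\]
for every $t\in\R$, so that $\phi$ restricts to a homeomorphism of sublevel sets at every height $t$. In particular, $\phi$ sends each connected component (a closed subinterval of $[0,1]$) of $f^{-1}(-\infty,t]$ bijectively onto a component of $g^{-1}(-\infty,t]$, inducing a bijection
\[
\phi_t:\pi_0\bigl(f^{-1}(-\infty,t]\bigr)\longrightarrow \pi_0\bigl(g^{-1}(-\infty,t]\bigr).
\]

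The next step is to use the orientation-preserving hypothesis on $\phi$ to upgrade each $\phi_t$ to an order-isomorphism. If $I<J$ are two distinct intervals in $f^{-1}(-\infty,t]$, meaning every point of $I$ lies to the left of every point of $J$ in $[0,1]$, then because $\phi$ is a strictly increasing homeomorphism of $[0,1]$, we have $\phi(I)<\phi(J)$ as subintervals of $[0,1]$. Hence $\phi_t$ preserves the left-to-right total order on path components used to define the ordered persistent sets $W_f$ and $W_g$ as in the example following the definition of an ordered persistent set.

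Third, I would verify naturality in $t$. For $r\leq t$ the inclusion $f^{-1}(-\infty,r]\hookrightarrow f^{-1}(-\infty,t]$ commutes with $\phi$, so the square of induced maps on $\pi_0$ commutes. Hence the family $\{\phi_t\}_{t\in\R}$ assembles into a natural transformation $W_f\Rightarrow W_g$ which is pointwise an order-isomorphism, and is therefore an isomorphism in the category of ordered persistent sets defined in the preceding section.

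Finally, since chiral merge trees are a full subcategory of ordered persistent sets via Lemma~\ref{lem:OMT-to-OPS}, and the CMT associated to a PL function on $[0,1]$ is reconstructed from its ordered persistent set of sublevel-set components by the ordered analog of the construction in Lemma~\ref{lem:MS-to-MT}, the isomorphism $W_f\cong W_g$ transports to an isomorphism of chiral merge trees. The main obstacle is simply making the ordered analog of the Morse set / Morse tree equivalence (Lemma~\ref{lem:morse-set-tree-equiv}) explicit: one must check that the total order assigned to each set of sibling edges in the CMT is exactly the order read off from $W_f$, and that this order is what $\phi_t$ preserves. Once this bookkeeping is done, the corollary is immediate.
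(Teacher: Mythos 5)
Your proposal is correct and follows essentially the same route as the paper, which proves the corollary by observing that $\phi\times\id_{\R}$ gives a level-preserving, orientation-preserving homeomorphism of graphs, hence of sublevel sets, so the left-to-right order on path components is preserved at every height. You have simply made explicit (via the ordered persistent sets $W_f$ and $W_g$ and Lemma~\ref{lem:OMT-to-OPS}) the naturality and order-preservation bookkeeping that the paper leaves implicit in the paragraph preceding the corollary.
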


\begin{rmk}
The reverse direction of Corollary~\ref{cor:ge-cmt} is not true.
For an easy example, consider the PL function drawn in Figure~\ref{fig:ge-cmt} and consider a modification where the function is truncated at the right-most local minimum.
The original PL function and its truncation will have the same chiral merge tree, but will not be graph-equivalent because the function value at $x=0$ and $x=1$ is a graph-equivalence invariant.
\end{rmk}

Corollary~\ref{cor:ge-cmt} is one direction in a bijection that connects graph-equivalence classes of functions with chiral merge trees.
In order to go the other way, from a chiral merge tree (CMT) to a function, we must delve deeper into the structure of CMTs by proving several technical lemmas.

\begin{lem}\label{lem:vertex-order}
A chiral merge tree has a naturally associated total order on its vertices.
\end{lem}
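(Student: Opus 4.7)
The plan is to construct the order via an in-order traversal of the tree. Recall that in a chiral merge tree $\pi: T \to \R$, every internal vertex $v$ has exactly two children, and the chiral structure designates these as a left child $L_v$ and a right child $R_v$. For each vertex $v$ of $T$, let $T_v$ denote the compact subtree consisting of $v$ together with all of its descendants. I will inductively define a total order $\leq_v$ on the finite vertex set $V(T_v)$ by induction on $|V(T_v)|$.

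For the base case, if $v$ is a leaf, then $V(T_v) = \{v\}$ and the order is trivial. For the inductive step, if $v$ is an internal vertex with children $L_v$ and $R_v$, the induction hypothesis supplies total orders $\leq_{L_v}$ and $\leq_{R_v}$ on $V(T_{L_v})$ and $V(T_{R_v})$ respectively. Since $V(T_v)$ decomposes disjointly as $V(T_{L_v}) \sqcup \{v\} \sqcup V(T_{R_v})$, I define $\leq_v$ as the concatenation order: every vertex in $V(T_{L_v})$ precedes $v$, which in turn precedes every vertex in $V(T_{R_v})$, with the inherited orders within each of the three pieces. Taking $v = v_{\infty}$ then yields a total order on all of $V(T)$.

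The construction refers only to the combinatorial data of the chiral structure, so the resulting order is canonically associated to the chiral merge tree, justifying the adverb ``naturally'' in the statement. That $\leq_v$ is indeed a total order follows immediately by induction, since antisymmetry, transitivity, and totality for a concatenation of three total orders on disjoint sets are elementary. I do not anticipate a serious obstacle; the only mild subtlety is a book-keeping point around the root vertex $v_{\infty}$, which one treats as an ordinary internal vertex of the compact tree $T \setminus e_{\infty}$. To make the construction more evocative, I would close by observing that when $T$ arises as the chiral merge tree of a PL function $f:[0,1] \to \R$ with distinct critical values, the order so produced agrees with the left-to-right order of the critical points of $f$ in $[0,1]$: an internal vertex corresponds to a local maximum that geometrically separates the critical points appearing in its left subtree from those in its right subtree.
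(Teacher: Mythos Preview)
Your proposal is correct and essentially the same as the paper's: the paper encodes each vertex by the $L/R$ string of its path from $v_{\infty}$ and orders vertices lexicographically under the convention $L < \varnothing < R$, which is exactly the in-order traversal you describe recursively. Indeed, the paper itself identifies its construction with the \texttt{IN-ORDER} traversal in the proof of the very next lemma, so the only difference is presentational.
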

\begin{proof}
As noted in Definition~\ref{defn:merge-tree} a CMT is rooted at the vertex $v_{\infty}$.
Consequently, for any vertex $v$ in $T$ there is a unique shortest path $\gamma$ from $v_{\infty}$ to $v$.
We can represent the path $\gamma$ as either a string of vertices 
$v_{\infty}v_1 \cdots v_{d}$ 
or a string of edges 
$e_1 \cdots e_d$.
Since every edge $e_i$ is labeled with an element of the set $\{L, R\}$, we can associate to $\gamma$ a unique sequence of letters $x_1 \cdots x_{d}$ where each $x_i$ indicates whether $e_i$ is a left incoming edge ($L$) or a right incoming edge ($R$) to $v_{i-1}$
Note that the sequence $x_1\cdots x_{d}$ also uniquely determines the path $\gamma$ and hence the vertex $v$, with the number $d$ indicating the depth of $v$ in the tree.

Suppose we have a vertex $v$, represented by a $L/R$ sequence $x_1\cdots x_d$, and another vertex $w$, represented by a $L/R$ sequence $y_1\cdots y_{d'}$.
Since in general $d\neq d'$, we append a sequence of empty characters $\varnothing$ to whatever string is shortest. 
By using the rule that $L < \varnothing < R$ we can use the lexicographical ordered to order $v$ and $w$.
\end{proof}

\begin{ex}
As an example, suppose we have three vertices, represented by $LL$, $L$ and $LR$.
The ordering described would put $LL < L\varnothing < LR$.
\end{ex}

\begin{rmk}
For a general ordered merge tree it's not clear how to specify a total ordering on the vertices. 
In particular, at a vertex with an odd number of children, it is not clear how to order this vertex with respect to its children. 
More concretely, this question amounts to a choice between orderings of the form $1 < \varnothing < 2 < 3$ and $1 < 2 < \varnothing < 3$, neither of which is canonical.
\end{rmk}

One of the properties of this ordering is that leaf nodes are exactly the odd numbers between $1$ and $N$.

\begin{lem}\label{lem:odd-leaves}
Let $T$ be a chiral merge tree.
Suppose $N$ is the number of vertices in $T$.
In the total ordering $\{v_1 < \ldots < v_N\}$ of the vertices provided by Lemma~\ref{lem:vertex-order}, the leaf nodes correspond to $v_i$ when $i$ is odd.
\end{lem}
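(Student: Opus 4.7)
The plan is to induct on $N$ by exploiting the fact that, in a CMT (where every non-leaf vertex has exactly two children), the ordering from Lemma~\ref{lem:vertex-order} coincides with the in-order traversal of the rooted binary tree $T\setminus e_{\infty}$. I will simultaneously establish (a) $N$ is odd, and (b) the leaves of $T$ are exactly the odd-indexed vertices in the total order.

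The base case is a single-vertex tree, where $N=1$ and the lone leaf sits at position $1$. For the inductive step, suppose $v_{\infty}$ has two children, and let $T_L$ and $T_R$ be the subtrees rooted at these children, carrying the induced CMT structures; write $N_L$ and $N_R$ for their vertex counts, so $N = N_L + 1 + N_R$. The decisive observation is that, using the $L/R$-string encoding from the proof of Lemma~\ref{lem:vertex-order}, every vertex of $T_L$ has a string beginning with $L$, every vertex of $T_R$ has a string beginning with $R$, and $v_{\infty}$ has the empty string. Because the convention is $L < \varnothing < R$, the total order on $T$ is the concatenation: first all vertices of $T_L$, then $v_{\infty}$, then all vertices of $T_R$. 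Moreover, within $T_L$ the order inherited from $T$ coincides with the intrinsic CMT order on $T_L$, since prepending a common initial letter to every string preserves lexicographic comparisons; the same holds for $T_R$.

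By the inductive hypothesis $N_L$ and $N_R$ are both odd, so $N = N_L + 1 + N_R$ is odd, giving (a). The leaves of $T_L$ occupy positions $1, 3, \ldots, N_L$ within $T$'s ordering; the non-leaf $v_{\infty}$ occupies the even position $N_L + 1$; and the leaves of $T_R$ occupy positions $N_L + 1 + k$ as $k$ ranges over the odd integers in $\{1, \ldots, N_R\}$. Since $N_L$ is odd, those latter positions are exactly $N_L+2, N_L+4, \ldots, N$, i.e.\ all the odd positions strictly above $N_L + 1$. Thus the leaves of $T$ are exactly the odd-indexed vertices, completing the induction. I do not anticipate any genuine obstacle; the only subtle point is verifying that the ordering on each subtree inherited from $T$ agrees with its intrinsic CMT ordering, which the prepending-invariance of lex comparison settles at once.
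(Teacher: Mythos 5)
Your proof is correct. It rests on the same key identification as the paper's --- namely that the lexicographic order with $L < \varnothing < R$ is the in-order traversal of the full binary tree $T\setminus e_{\infty}$ --- but you establish the parity claim by a different route. The paper quotes the recursive \texttt{IN-ORDER} algorithm and argues procedurally about the print sequence (the first name printed is a leaf, its parent is printed second, fullness forces another leaf third, and so on), then separately verifies via a successor analysis of $L/R$-strings that this print order agrees with the order of Lemma~\ref{lem:vertex-order}. You instead run a structural induction on the decomposition $T = T_L \cup \{v_{\infty}\} \cup T_R$, observing that prepending a fixed initial letter preserves lexicographic comparisons, so the total order on $T$ is the concatenation of the intrinsic orders on $T_L$, then $v_{\infty}$, then $T_R$; the parity bookkeeping $N = N_L + 1 + N_R$ with $N_L, N_R$ odd then does the rest. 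Your version is more self-contained and makes the oddness of $N$ (which the paper only asserts later, in Proposition~\ref{prop:cmt-to-pl}) an explicit part of the induction, at the cost of not exhibiting the traversal algorithm that the paper reuses in its reconstruction argument. Both are complete proofs; yours formalizes what the paper's leaves partly to the reader.
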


\begin{proof}
We use a recursive description of the enumeration given in Lemma~\ref{lem:vertex-order}.
This is the \texttt{IN-ORDER} tree traversal common in computer science, see Chapter 12 of~\cite{cormen2009introduction}.
Our input is a full binary tree with every vertex labeled with a name.
The algorithm takes in a binary tree $T$ with a distinguished vertex $v$.
If $v$ has children, then we call the algorithm again on the left sub-tree.
Print the name of $v$.
Call the algorithm on the right sub-tree.

Note that the algorithm does not stop the recursive call until $v$ has no children, i.e.~it is a leaf node.
Thus the first node name that is printed is a leaf node.
Popping out of this first, deepest level of recursion, we must print the name of the parent, which is the second name printed.
The assumption that the binary tree is full guarantees that the next call on the right subtree is not empty, so another leaf node is printed next.
This implies that leaf nodes are always printed at odd numbers.

We now explain why the order of names called in the algorithm above gives the enumeration described in Lemma~\ref{lem:vertex-order}. 
Suppose $\underline{S}$ is a string of $L/R$'s and $\underline{S}L$ is the string describing the leaf node just printed by the algorithm above. 
In the lexicographical ordering the immediate successor of $\underline{S}L$ is $\underline{S}$. 
The immediate successor of $\underline{S}$ is $\underline{S}RL\cdots L$, where the number of $L$'s appearing to the right of $R$ is maximal for the given tree, thereby implying in the algorithm above that this vertex's name is the next to be printed.
\end{proof}

We can now prove a reconstruction result.

\begin{prop}\label{prop:cmt-to-pl}
To every chiral merge tree $\pi:T \to \R$ with at least three vertices there is a PL function $f_T: [0,1] \to \R$ whose chiral merge tree is $T$.
\end{prop}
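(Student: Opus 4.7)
The plan is to build $f_T$ as a piecewise linear interpolation over the vertices of $T$ enumerated by the in-order traversal of Lemma~\ref{lem:vertex-order}. Writing this ordering as $v_1, v_2, \ldots, v_N$, where $N$ is odd and $v_i$ is a leaf exactly when $i$ is odd (Lemma~\ref{lem:odd-leaves}), I will place the $v_i$ at the equally-spaced sample points $x_i := (i-1)/(N-1) \in [0,1]$, set $f_T(x_i) := \pi(v_i)$, and extend $f_T$ linearly on each interval $[x_i, x_{i+1}]$. Two things then need to be verified: that $f_T$ has the right alternating critical-point structure, and that its associated chiral merge tree is isomorphic to $T$.

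The first verification relies on a standard property of in-order traversal on a full binary tree: for a leaf $v_i$ (odd $i$) both the predecessor $v_{i-1}$ and successor $v_{i+1}$, when they exist, are proper ancestors of $v_i$, while for an internal vertex $v_i$ (even $i$) both neighbors are proper descendants (the rightmost of the left subtree and the leftmost of the right subtree, respectively). Combined with the monotonicity of $\pi$ along ancestor chains in a merge tree, this forces $\pi(v_i) < \pi(v_{i \pm 1})$ at odd $i$ and $\pi(v_i) > \pi(v_{i \pm 1})$ at even $i$, so the sample points alternate strictly between local minima and local maxima of $f_T$.

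The heart of the proof rests on observing that in-order traversal sends each subtree of $T$ to a contiguous range of indices. Specifically, the subtree $T_{v_{2k}}$ rooted at an internal vertex $v_{2k}$ corresponds to a range $[a_k, b_k] \ni 2k$, with left subtree occupying $[a_k, 2k-1]$ and right subtree occupying $[2k+1, b_k]$. Every $v_j$ in this range satisfies $\pi(v_j) < \pi(v_{2k})$, while $v_{a_k - 1}$ and $v_{b_k + 1}$ (when they exist) are proper ancestors of $v_{2k}$ and so have $\pi$-values exceeding $\pi(v_{2k})$. Consequently, at height $t = \pi(v_{2k}) - \epsilon$ for sufficiently small $\epsilon > 0$, the sublevel set $f_T^{-1}(-\infty, t]$ contains two distinct components abutting $x_{2k}$: a left component containing the sample points $\{x_{a_k}, \ldots, x_{2k-1}\}$ and a right one containing $\{x_{2k+1}, \ldots, x_{b_k}\}$, whose outer boundaries are cut off by $f_T$ climbing above $\pi(v_{2k})$ on the segments adjacent to $x_{a_k-1}$ or $x_{b_k+1}$. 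When $t$ reaches $\pi(v_{2k})$ these two components merge at $x_{2k}$, reproducing the merge event at $v_{2k}$ in $T$; because the left component lies to the left of $x_{2k}$, chirality is preserved. Running this analysis at every internal vertex yields an isomorphism $\mathrm{CMT}(f_T) \cong T$ of chiral merge trees.

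The hardest part I anticipate is keeping the bookkeeping clean while performing this component-tracking at every internal vertex simultaneously. The cleanest route is likely to recast the last step as an induction on $N$, decomposing at the root $v_\infty$ and invoking the inductive hypothesis separately on the left and right subtrees, which correspond to $f_T$ restricted to $[0, x_j]$ and $[x_j, 1]$ (where $j$ is the in-order position of $v_\infty$). A minor technical subtlety is that these restrictions are not literally equal to the PL functions produced by the construction on the two subtrees: each has an extra climb up to $\pi(v_\infty)$ at the boundary. However, because this climb lies strictly above the root value of the corresponding subtree, it only lengthens the unbounded edge $e_\infty$ and does not alter the compact part of the associated merge tree, so the induction goes through.
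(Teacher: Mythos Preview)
Your construction is identical to the paper's: enumerate vertices by the in-order traversal of Lemma~\ref{lem:vertex-order}, place them at equally spaced abscissae $x_i=(i-1)/(N-1)$ with ordinates $\pi(v_i)$, and interpolate linearly; the verification that odd-indexed nodes are local minima and even-indexed nodes are local maxima is also the same. Where you differ is only in the final step: the paper simply asserts that matching in-order position and $\pi$-values forces the chiral merge tree of $f_T$ to coincide with $T$, whereas you supply the missing justification via the contiguous-range property of in-order traversal (and outline an inductive alternative), so your argument is the same route carried out with more care.
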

\begin{proof}
Note that the number of nodes in a chiral merge tree must be odd.
We apply Lemma~\ref{lem:vertex-order} to obtain an ordering of the vertices, which we label as $v_1 < \ldots < v_N$. 
By definition of a CMT, we also have real values $\pi(v_1), \ldots, \pi(v_N)$ associated to each of the vertices.
To each vertex we can associate a pair of coordinates $(x_i,y_i)$ with $x_i=\frac{i-1}{N-1}$ and $y_i=\pi(v_i)$.
Now connect each $(x_i,y_i)$ to $(x_{i+1},y_{i+1})$ with a straight line.
This defines a PL function $f_T:[0,1] \to \R$.

We check that the chiral merge tree of $f_T$ is $T$. 
As the proof of Lemma~\ref{lem:odd-leaves} shows, when $i$ is odd $(x_i,y_i)$ represents a leaf node and $(x_{i+1},y_{i+1})$ represents an ancestral (parent, grandparent...) node, so $y_i < y_{i+1}$.
This implies that the line connecting $(x_i,y_i)$ to $(x_{i+1},y_{i+1})$ has positive slope when $i$ is odd and negative slope when $i$ is even.
As a consequence, when $i$ is odd, the point $(x_i,y_i)$ is a local minimum and the point $(x_{i+1},y_{i+1})$ is a local maximum of the function $f_T$.
Note that the $x$-value of each of these maxima and minima are ordered by the usual order on $[0,1]$.
Let $v_i=(x_i,y_i)$ denote the $i$th vertex of the merge tree associated to $f_T$.
Note that the vertices $(x_i,y_i)$ have the same $x$-order (in-order) and $y$-values as the abstract chiral merge tree $T$.
This implies that the merge tree determined by $f_T$ is the same as $T$.
\end{proof}

\begin{defn}\label{defn:Morse-like}
We say a continuous function $f:[0,1] \to \R$ is \define{Morse-like} if it is graph-equivalent to a PL function $f_{PL}$ where every critical point is isolated and has a distinct critical value.
In other words the graph of $f_{PL}$ consists of a finite number of line segments, each of which have non-zero slope. 
\end{defn}

Although the converse of Corollary~\ref{cor:ge-cmt} is not true, by imposing boundary conditions we can prove a restricted version of the converse.

\begin{lem}\label{lem:cmt-ge}
Suppose $f$ and $g$ are two Morse-like functions having both $x=0$ and $x=1$ as local minima.
If $f$ and $g$ have isomorphic chiral merge trees, then $f$ and $g$ are graph equivalent.
\end{lem}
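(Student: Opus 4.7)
The plan is to reduce to the piecewise-linear case using the Morse-likeness hypothesis, use the given chiral merge tree isomorphism to produce an order- and value-preserving bijection between the critical points of $f$ and of $g$, and then glue together monotone homeomorphisms between consecutive critical points to build the required graph-equivalence $\phi$.

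First I would invoke Definition~\ref{defn:Morse-like} to choose PL representatives $f_{PL}$ of $f$ and $g_{PL}$ of $g$ with isolated critical points and distinct critical values. Graph-equivalence is a transitive equivalence relation and preserves chiral merge trees by Corollary~\ref{cor:ge-cmt}, so $f_{PL}$ and $g_{PL}$ again have isomorphic CMTs. Since any orientation-preserving self-homeomorphism of $[0,1]$ fixes the endpoints, $f_{PL}$ and $g_{PL}$ also have local minima at $0$ and $1$. Thus it suffices to handle the case where $f$ and $g$ are themselves PL with isolated, distinctly valued critical points and boundary minima, which I assume henceforth.

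Next I would use the CMT isomorphism $\Phi: T_f \to T_g$ to match up critical points. In this PL setting, the vertices of $T_f$ are in bijection with the critical points of $f$ (leaves with local minima, interior vertices with local maxima), and the $\pi$-values coincide with the critical values. The key claim is that the lexicographic total order on vertices provided by Lemma~\ref{lem:vertex-order} agrees with the left-to-right order of the corresponding critical points in $[0,1]$; this follows by induction on depth, since the left subtree below a vertex $v$ collects exactly those critical points lying to the left of $v$'s location and the right subtree those lying to its right, and the $L<\varnothing<R$ convention places $v$ between them. Because $\Phi$ preserves chirality and $\pi$-values, it therefore induces a bijection $p_i \mapsto q_i$ between the ordered critical points $p_1 < \cdots < p_N$ of $f$ and $q_1 < \cdots < q_N$ of $g$ with $f(p_i) = g(q_i)$ for all $i$, and $p_1=q_1=0$, $p_N=q_N=1$.

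Finally I would glue local homeomorphisms. Because $f$ is PL with isolated, distinctly-valued critical points, $f$ is strictly monotone on each segment $[p_i,p_{i+1}]$; similarly $g$ is strictly monotone on $[q_i,q_{i+1}]$, and the two monotonicities go in the same direction because the endpoint values match. Hence $\phi|_{[p_i,p_{i+1}]} := (g|_{[q_i,q_{i+1}]})^{-1}\circ f|_{[p_i,p_{i+1}]}$ is an orientation-preserving homeomorphism onto $[q_i,q_{i+1}]$, and these pieces glue continuously at the critical points to yield an orientation-preserving homeomorphism $\phi:[0,1]\to[0,1]$ with $g\circ\phi = f$. The main obstacle I expect is the bookkeeping in the second step, namely precisely verifying that the lexicographic vertex order from Lemma~\ref{lem:vertex-order} encodes the horizontal order of critical points in any PL realization of a CMT; once this identification is firmly in place, the construction of $\phi$ is a routine gluing.
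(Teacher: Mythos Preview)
Your proposal is correct and follows essentially the same strategy as the paper: reduce to the PL case, use the CMT to align the critical points in order and value, then glue monotone pieces. The one structural difference is that the paper routes through the canonical PL function $h_T$ built in Proposition~\ref{prop:cmt-to-pl}: it shows $f$ is graph-equivalent to $h_T$, and symmetrically $g$ is graph-equivalent to $h_T$, then invokes transitivity. You instead match $f$ and $g$ directly via the CMT isomorphism $\Phi$. Both approaches rest on the same key fact, namely that the lexicographic vertex order of Lemma~\ref{lem:vertex-order} coincides with the left-to-right order of critical points in any PL realization with boundary minima; the paper effectively packages this inside Proposition~\ref{prop:cmt-to-pl}, whereas you isolate it explicitly as the ``main obstacle.'' Your gluing map $(g|_{[q_i,q_{i+1}]})^{-1}\circ f|_{[p_i,p_{i+1}]}$ is slightly more robust than the paper's affine pieces, since it does not presume the PL representatives are linear between consecutive critical points.
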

\begin{proof}
Since the functions $f$ and $g$ are Morse-like and since graph-equivalence is transitive and preserves CMTs, it suffices to consider the case where $f$ and $g$ are PL functions.
Let $\pi: T \to \R$ denote the common chiral merge tree of $f$ and $g$
and let $h_T: [0,1] \to \R$ denote the PL function associated to $T$ constructed in Proposition~\ref{prop:cmt-to-pl}.
We now show that $h_T$ is graph-equivalent to $f$, which, by arguing symmetrically, must be graph-equivalent to $g$.
First we note that both $f$ and $h_T$ have $N$ critical points, which we label $\{p_1 < \ldots p_N\}$ and $\{x_1 < \ldots < x_N\}$.
The only way for this not to be the case is if $f$ had a local max at either $x=0$ or $x=1$, which we ruled out by hypothesis.
Consequently, each $p_i$ and $x_i$ are the same type of critical point with the same critical value.
It is thus easy to define an affine map $\phi_i: [p_i,p_{i+1}] \to [x_i,x_{i+1}]$ making $f|_{[p_i,p_{i+1}]}=h_T \circ \phi_i$.
Concatenating these affine maps together and using the fact that $f(0)=h_{T}(0)$ and $f(1)=h_T(1)$ we can define a PL homeomorphism $\phi:[0,1] \to [0,1]$ take $f$ to $h_T$.
This proves that $f$ and $h_T$ are graph-equivalent.
Arguing with $g$ in place of $f$ proves that $g$ and $h_T$ are graph-equivalent.
Transitivity of graph-equivalence proves that $f$ and $g$ are graph-equivalent.
\end{proof}

\begin{cor}\label{cor:M-to-CMT}
Let $\mathcal{M}$ denote the set of Morse-like functions on the interval with local minima at $x=0$ and $x=1$, modulo graph-equivalence.
Let $\CMTspace$ denote the set of isomorphism classes of chiral merge trees. 
The map
\[
\Psi: \mathcal{M} \to \CMTspace
\]
is a bijection onto its image.
\end{cor}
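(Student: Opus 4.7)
The plan is to observe that Corollary~\ref{cor:M-to-CMT} is really just a bookkeeping statement that packages together two results already proved in the excerpt: the well-definedness of $\Psi$ on graph-equivalence classes, and its injectivity. Being a ``bijection onto its image'' for a map of sets is by definition the same as being injective, so the entire content of the corollary reduces to checking those two properties.

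First I would verify that $\Psi$ is well-defined as a map out of $\mathcal{M}$. Given a Morse-like function $f:[0,1]\to\R$ with local minima at $x=0$ and $x=1$, its persistent path component set is a Morse set in the sense of Definition~\ref{defn:morse-set}, and Lemma~\ref{lem:MS-to-MT} together with the left-to-right ordering on intervals in sublevel sets endows the associated merge tree with a chiral structure. If $f$ and $g$ are graph-equivalent representatives of the same class in $\mathcal{M}$, then Corollary~\ref{cor:ge-cmt} tells us $\Psi(f)\cong\Psi(g)$ as chiral merge trees, so $\Psi$ descends unambiguously to the quotient $\mathcal{M}$.

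Next I would establish injectivity. Suppose $[f], [g]\in\mathcal{M}$ satisfy $\Psi([f])=\Psi([g])$, i.e.\ $f$ and $g$ have isomorphic chiral merge trees. By the boundary-condition hypotheses built into $\mathcal{M}$, both $f$ and $g$ have local minima at $x=0$ and $x=1$, so Lemma~\ref{lem:cmt-ge} applies directly and yields that $f$ and $g$ are graph-equivalent, i.e.\ $[f]=[g]$ in $\mathcal{M}$. Combining well-definedness and injectivity, $\Psi$ is a bijection between $\mathcal{M}$ and its image $\Psi(\mathcal{M})\subseteq\CMTspace$.

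There is no real obstacle here: the technical work was already done in Proposition~\ref{prop:cmt-to-pl} (constructing a PL realization $h_T$ of any chiral merge tree $T$) and in Lemma~\ref{lem:cmt-ge} (using $h_T$ as an intermediate witness and concatenating affine maps on intervals between successive critical points to produce the graph-equivalence $\phi:[0,1]\to[0,1]$). The only thing worth remarking on in the write-up is why $\Psi$ need not be surjective: the image of $\Psi$ consists precisely of those chiral merge trees $T$ whose leftmost and rightmost leaves correspond to the endpoints $x=0$ and $x=1$, which, as the remark following Corollary~\ref{cor:ge-cmt} illustrates, is a genuine restriction on $\CMTspace$.
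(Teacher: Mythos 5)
Your proof is correct and takes the same route as the paper: well-definedness of $\Psi$ on graph-equivalence classes via Corollary~\ref{cor:ge-cmt}, and injectivity via Lemma~\ref{lem:cmt-ge}, which is all that ``bijection onto its image'' requires. Your closing aside about non-surjectivity is not needed for the statement and is in fact doubtful---by Lemma~\ref{lem:odd-leaves} the first and last in-order vertices of any chiral merge tree are leaves, so the PL realization of Proposition~\ref{prop:cmt-to-pl} automatically has local minima at $x=0$ and $x=1$, suggesting $\Psi$ actually is surjective.
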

\begin{proof}
Corollary~\ref{cor:ge-cmt} implies that the map $\Psi$ is well-defined, i.e.~that the chiral merge tree construction is invariant under graph-equivalence.
Lemma~\ref{lem:cmt-ge} then shows that $\Psi([f])=\Psi([g])$ implies that $[f]=[g]$, thereby proving that $\Psi$ is an injection.
\end{proof}

We have now reached our main result.

\begin{thm}\label{thm:count-persistence}
Let $\mathcal{M}$ and $\CMTspace$ be the sets described in Corollary~\ref{cor:M-to-CMT}.
If $\barcode=\{I_j\}_{j=1}^N$ is a barcode where $I_1=[b_1,\infty)$, $I_j=[b_j,d_j)\subset I_1$ with $d_j > d_{j+1}$ for all $j\geq 2$, and where every left-hand endpoint $b_j$ is distinct, then the number of graph-equivalence classes of functions in $\mathcal{M}$ realizing $\barcode$ is
\[
| PH_0^{-1}(\barcode)| = 2^{N-1}\prod_{j=2}^N\mu_{\barcode}(I_j).
\]
Here, as in Theorem~\ref{thm:counting-merge-trees} and Corollary~\ref{cor:counting-CMTs}, $\mu_{\barcode}(I_j):=|\{I_k\in\barcode \mid I_j \subset I_k\}|$.
Also, we have identified $PH_0$ with the composition of $\Psi:\mathcal{M} \to \CMTspace$ followed by the Chiral Elder map $\Xi_{\CMTspace}:\CMTspace \to \BCspace$.
\end{thm}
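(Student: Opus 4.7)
The plan is to leverage the factorization $PH_0 = \Xi_{\CMTspace} \circ \Psi$ stated in the theorem and reduce the count to Corollary~\ref{cor:counting-CMTs}. If $\Psi: \mathcal{M} \to \CMTspace$ restricts to a bijection between graph-equivalence classes realizing $\barcode$ and chiral merge trees in the fiber $\Xi_{\CMTspace}^{-1}(\barcode)$, then the desired cardinality equals $|\Xi_{\CMTspace}^{-1}(\barcode)|$, and Corollary~\ref{cor:counting-CMTs} yields exactly the stated formula $2^{N-1}\prod_{j=2}^N \mu_{\barcode}(I_j)$.

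Injectivity of $\Psi$ is already in hand from Corollary~\ref{cor:M-to-CMT}. For surjectivity onto $\Xi_{\CMTspace}^{-1}(\barcode)$, I would take an arbitrary chiral merge tree $T$ realizing $\barcode$ and invoke Proposition~\ref{prop:cmt-to-pl} to produce a PL function $h_T : [0,1] \to \R$ whose chiral merge tree is $T$. The crucial check is that $h_T \in \mathcal{M}$, i.e., that $h_T$ is Morse-like with local minima at both endpoints. Since the CMT arises from a Morse tree, the underlying binary tree is full and therefore has an odd total number of vertices $N$. By Lemma~\ref{lem:odd-leaves}, both $v_1$ and $v_N$ in the in-order traversal are leaves, and the construction in Proposition~\ref{prop:cmt-to-pl} places these leaves at $x = 0$ and $x = 1$. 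Because $h_T$ is built by connecting ordered vertex-positions with straight line segments of alternating slope, the endpoints are indeed local minima; the distinct-right-endpoint hypothesis $d_j > d_{j+1}$ together with the newly imposed distinctness of the left endpoints $b_j$ guarantees that all critical values of $h_T$ are distinct, so $h_T$ is genuinely Morse-like.

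Chaining this bijection with Corollary~\ref{cor:counting-CMTs} then delivers
\[
|PH_0^{-1}(\barcode)| = |\Psi^{-1}(\Xi_{\CMTspace}^{-1}(\barcode))| = |\Xi_{\CMTspace}^{-1}(\barcode)| = 2^{N-1}\prod_{j=2}^N \mu_{\barcode}(I_j),
\]
which is the desired formula. The principal obstacle I anticipate is the careful unpacking of the boundary-condition check, i.e., verifying that the left-right ordering on the fiber of $h_T$ over each $t$ agrees with the chirality data on $T$ used in Lemma~\ref{lem:vertex-order}; this reduces to checking that the in-order traversal underlying Proposition~\ref{prop:cmt-to-pl} commutes with the fiberwise ordering induced by $[0,1]$, after which the assembly of the previously established results is essentially mechanical.
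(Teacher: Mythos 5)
Your proposal is correct and follows essentially the same route as the paper: reduce to the count of chiral merge trees in Corollary~\ref{cor:counting-CMTs} via the bijection $\Psi$ coming from Corollary~\ref{cor:M-to-CMT} and Lemma~\ref{lem:cmt-ge}, with surjectivity supplied by the realization in Proposition~\ref{prop:cmt-to-pl}. You spell out the surjectivity check (that $h_T$ lies in $\mathcal{M}$, using Lemma~\ref{lem:odd-leaves} and the distinctness of the $b_j$) more explicitly than the paper does, which is a welcome but not essentially different elaboration.
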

\begin{proof}
Clearly $\barcode$ satisfies the hypotheses of Corollary~\ref{cor:counting-CMTs}.
This implies that $\barcode$ is in the image of the map $\Xi_{\CMTspace}:\CMTspace \to \BCspace$, thereby allowing us to construct $| \Xi_{\CMTspace}^{-1}(\barcode)|$ many chiral merge trees realizing $\barcode$.
Lemma~\ref{lem:cmt-ge} then implies that each chiral merge tree picks out a unique graph equivalence class.
\end{proof}

\newpage

\section{Conflict of Interest Statement}

The author states that there is no conflict of interest.

\bibliographystyle{plain}
\bibliography{refs}

\end{document}